\newcommand*{\myproofname}{Proof}
\newenvironment{claimproof}[1][\myproofname]{\begin{proof}[#1]}{\end{proof}}
\title{The asymptotic behavior of the correspondence chromatic number}
\date{}
\author{Anton Bernshteyn\thanks{Supported by the Illinois Distinguished Fellowship.}\\ University of Illinois at Urbana-Champaign}
\newcommand{\neutralize}[1]{\expandafter\let\csname c@#1\endcsname\count@}
\newtheorem{theo}{Theorem}[section]
\newtheorem{lemma}[theo]{Lemma}
\newtheorem{corl}[theo]{Corollary}
\newtheorem{claim}{Claim}[theo]
\theoremstyle{definition}
\newtheorem{defn}[theo]{Definition}
\newtheorem{remk}[theo]{Remark}
\renewcommand{\epsilon}{\varepsilon}
\newenvironment{lemmacopy}[1]
{%
	\neutralize{theo}\phantomsection
	\begin{lemma}}
	{\end{lemma}}
\newenvironment{theobis}[1]
{
	\neutralize{theo}\phantomsection
	\begin{theo}}
	{\end{theo}}
\begin{document}
	
	\maketitle
	
	\begin{abstract}
		Alon \cite{Alon} proved that for any graph $G$, $\chi_\ell(G) = \Omega(\ln d)$, where $\chi_\ell(G)$ is the list chromatic number of $G$ and $d$ is the average degree of $G$. Dvo\v{r}\'{a}k and Postle~\cite{DP} recently introduced a generalization of list coloring, which they called \emph{correspondence coloring}. We establish an analog of Alon's result for correspondence coloring; namely, we show that $\chi_c(G) = \Omega(d/\ln d)$, where $\chi_c(G)$ denotes the correspondence chromatic number of $G$. We also prove that for triangle-free $G$, $\chi_c(G) = O(\Delta/\ln \Delta)$, where $\Delta$ is the maximum degree of $G$ (this is a generalization of Johansson's result about list colorings~\cite{Johansson}). This implies that the correspondence chromatic number of a regular triangle-free graph is, up to a constant factor, determined by its degree.
	\end{abstract}
	
	\section{Introduction}
	
	An important generalization of graph coloring, so-called \emph{list coloring}, was introduced independently by Vizing~\cite{Vizing} and Erd\H{o}s, Rubin, and Taylor~\cite{ERT}. It is defined as follows. Let $G$ be a graph\footnote{All graphs considered here are finite, undirected, and simple.} and suppose that for each vertex $v \in V(G)$, a set of available colors $L(v)$, called the \emph{list} of $v$, is specified. A proper coloring $c$ of $G$ is an \emph{$L$-coloring} if $c(v) \in L(v)$ for all $v \in V(G)$. $G$ is said to be \emph{$L$-colorable} if it admits an $L$-coloring; $G$ is \emph{$k$-list-colorable} (or \emph{$k$-choosable}) if it is $L$-colorable whenever $|L(v)| \geq k$ for all $v \in V(G)$. The least number $k$ such that $G$ is $k$-choosable is called the \emph{list chromatic number} (or the \emph{choosability}) of $G$ and is denoted by $\chi_\ell(G)$ (or $\operatorname{ch}(G)$).
	
	For all graphs $G$, $\chi(G) \leq \chi_\ell(G)$, where $\chi(G)$ denotes the ordinary chromatic number of $G$. Indeed, $G$ is $k$-colorable if and only if it is $L$-colorable with the list assignment such that $L(v) = \{1, \ldots, k\}$ for all $v \in V(G)$. This inequality can be strict; in fact, $\chi_\ell(G)$ cannot be bounded above by any function of $\chi(G)$ since there exist bipartite graphs with arbitrarily high list chromatic numbers.
	
	A striking difference between list coloring and ordinary coloring was observed by Alon in \cite{Alon}: It turns out that the list chromatic number of a graph can be bounded below by an increasing function of its average degree. More precisely:
	
	\begin{theo}[Alon~\cite{Alon}]\label{theo:Alon}
		Let $G$ be a graph with average degree $d$. Then
		$$
			\chi_\ell(G) \geq (1/2 - o(1))\log_2 d,
		$$
		where we assume that $d \to \infty$.
	\end{theo}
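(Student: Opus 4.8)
The plan is to deduce Alon's inequality from its analogue for graphs of large \emph{minimum} degree, and to prove that analogue by constructing, via the probabilistic method, a list assignment with lists of size $s$ admitting no proper colouring, with $s$ as large as $(1/2-o(1))\log_2\delta$. The constant $1/2$ is exactly the one dictated by the classical fact that $K_{m,m}$ with $m=\binom{2s-1}{s}=2^{(2+o(1))s}$ is not $s$-choosable: give the $m$ vertices of each side the $m$ distinct $s$-subsets of $[2s-1]$ as their lists; in any $L$-colouring the set of colours used on a side meets every $s$-subset, hence has at least $s$ elements, and the colour sets of the two sides are disjoint, so together they use at least $2s>2s-1$ colours — impossible. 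The main step below is a ``sparsified, randomised'' version of this.

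First I reduce to minimum degree. Iteratively deleting vertices of degree below $d/2$ cannot exhaust $V(G)$, since $G$ has at least $dn/2$ edges while each deletion destroys fewer than $d/2$ of them; hence $G$ has a non-empty subgraph $H$ with $\delta(H)\ge d/2$. As $\chi_\ell$ is monotone under subgraphs and $\log_2(d/2)=(1-o(1))\log_2 d$, it suffices to show $\chi_\ell(H)\ge(1/2-o(1))\log_2\delta$ for every graph $H$ of minimum degree $\delta\to\infty$. If convenient one may, losing only a constant factor in $\delta$ (hence nothing asymptotically), first pass to a bipartite subgraph with both sides of minimum degree $\Omega(\delta)$ by taking a random bipartition and then deleting low-degree vertices once more.

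Now fix a small $\varepsilon>0$ and put $s:=\lfloor(1/2-\varepsilon)\log_2\delta\rfloor$, so that $\binom{2s-1}{s}$ is far smaller than $\delta$; the goal is $\chi_\ell(H)>s$. The intended construction gives every vertex an independent uniformly random $s$-subset of $[2s-1]$ as its list: since $\delta$ greatly exceeds $\binom{2s-1}{s}$ (a coupon-collector bound wants $\delta\gtrsim\binom{2s-1}{s}\log\binom{2s-1}{s}$, which is still $o(\delta)$ for $\varepsilon>0$), the $\ge\delta$ lists in the neighbourhood of each vertex should already realise all $\binom{2s-1}{s}$ subsets of $[2s-1]$; one then tries to emulate the $K_{m,m}$ argument locally, forcing the colours used on each neighbourhood to number at least $s$ and deducing from this that some vertex cannot be legally coloured. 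Assuming this can be carried through with positive probability, one obtains $\chi_\ell(H)>s\ge(1/2-\varepsilon-o(1))\log_2\delta$, and letting $\varepsilon\to0$ proves the theorem.

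The whole difficulty sits in this last step, and it is not a routine adaptation of the complete-bipartite case. First, the argument has to be genuinely \emph{global}: a $\delta$-regular graph of large girth has no dense subgraph whatsoever, so there is no literal copy of $K_{m,m}$, and the clean ``disjoint colour sets of the two sides'' at the heart of the $K_{m,m}$ argument has no analogue in a sparse graph — converting the abundance of colour-rich neighbourhoods into an actual uncolourable vertex is the crux and the place where new ideas are needed. Second, the union bound installing the ``all $s$-subsets appear'' property ranges over all of $V(H)$, whose cardinality is not bounded in terms of $\delta$, so the per-vertex failure probability must be driven below every fixed polynomial rate; this means the coupon-collector and concentration estimates have to be pushed essentially to their limit, and it is precisely that sharpness which yields the exponent $1/2$ rather than a smaller constant. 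The two reductions and the remaining bookkeeping, by contrast, are routine.
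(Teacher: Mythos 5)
The paper itself does not prove Theorem~\ref{theo:Alon}; it is stated as a citation to Alon~\cite{Alon} for motivation, so there is no internal proof to compare your attempt against. Judged on its own merits, your reductions are fine: iteratively deleting vertices of degree below $d/2$ does leave a nonempty subgraph of minimum degree at least $d/2$, passing to a bipartite subgraph costs only a constant, and the $K_{m,m}$ computation with $m=\binom{2s-1}{s}$ (colour sets on the two sides each of size $\geq s$, disjoint, inside a $(2s-1)$-element ground set) is correct and does explain where the exponent $1/2$ comes from.

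However, you have explicitly left the heart of the theorem unproven. You write ``Assuming this can be carried through with positive probability'' and then yourself observe that ``converting the abundance of colour-rich neighbourhoods into an actual uncolourable vertex is the crux and the place where new ideas are needed.'' That is precisely the content of the theorem: the statement ``every neighbourhood realises all $s$-subsets'' does not, by itself, yield an uncolourable vertex, because in a sparse graph the ``two sides with disjoint colour sets'' of the $K_{m,m}$ argument have no local analogue. The missing ingredient in Alon's proof is a \emph{global} counting argument over partial colourings: one fixes a bipartite subgraph $(A,B)$, assigns random lists, and for each of the (at most $s^{|B|}$) ways to colour $B$ from its lists shows that, independently for each $v\in A$, the random list $L(v)$ lands entirely inside the set of colours already used on $N(v)$ with probability at least $1/\binom{2s-1}{s}$; a union bound over colourings of $B$ then finishes, provided $|A|$ is large enough relative to $|B|$ and $\binom{2s-1}{s}$. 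Arranging that last proviso --- and addressing the concern you correctly raise, namely that $|V(H)|$ is not bounded in terms of $\delta$ --- requires a further reduction to a subgraph whose order is controlled by $\delta$. None of this is in your proposal, and without it the argument does not close. So this is a genuine gap, not a routine omission.
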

	
	In this paper we study another notion, more general than list coloring, which was recently introduced by Dvo\v{r}\'{a}k and Postle~\cite{DP} in order to prove that every planar graph without cycles of lengths $4$ to $8$ is $3$-list-colorable, answering a long-standing question of Borodin~\cite{Borodin}. Before we proceed to the actual definition, let us consider an example. Suppose that $G$ is a graph and $L$ is a list assignment for~$G$. For each $v \in V(G)$, let
	$$
		\widetilde{L}(v) \coloneqq \{(v, c)\,:\, c \in L(v)\}.
	$$
	Thus, the sets $\widetilde{L}(v)$ are pairwise disjoint. Let $H$ be the graph with vertex set
	$$
		V(H) \coloneqq \bigcup_{v \in V(G)} \widetilde{L}(v)
	$$
	and edge set
	$$
		E(H)\coloneqq \{(v_1, c_1) (v_2, c_2)\,:\, v_1v_2 \in E(G), c_1 = c_2\}.
	$$
	Given an $L$-coloring $c$ of $G$, we define the set $I_c \subseteq V(H)$ as follows:
	$$
		I_c \coloneqq \{(v, c(v))\,:\, v \in V(G)\}.
	$$
	Observe that $I_c$ is an independent set in $H$ and for each vertex $v \in V(G)$, $|I_c \cap \widetilde{L}(v)| = 1$. Conversely, if $I \subseteq V(H)$ is an independent set such that $|I \cap \widetilde{L}(v)| = 1$ for all $v \in V(G)$, then, setting $c_I(v)$ to be the single color such that $(v, c_I(v)) \in I$, we obtain a proper $L$-coloring $c_I$ of $G$.
	
	This example can be generalized as follows.
	
	\begin{defn}
		Let $G$ be a graph. A \emph{cover} of $G$ is a pair $(L, H)$, where $L$ is an assignment of pairwise disjoint sets to the vertices of $G$ and $H$ is a graph with vertex set $\bigcup_{v \in V(G)} L(v)$, satisfying the following two conditions.
		\begin{enumerate}
			\item If $xy \in E(H)$, where $x \in L(u)$, $y \in L(v)$, then $uv \in E(G)$ (in particular, $u \neq v$).
			\item For each $uv \in E(G)$, the edges between $L(u)$ and $L(v)$ form a matching.
		\end{enumerate}
	\end{defn}
	
	\begin{defn}\label{defn:coloring}
		Suppose that $G$ is a graph and $(L, H)$ is a cover of $G$. An \emph{$(L,H)$-coloring} of $G$ is an independent set $I \subseteq V(H)$ such that $I \cap L(v) \neq \emptyset$ for all $v \in V(G)$. In this context, we refer to the vertices of $H$ as the \emph{colors}. $G$ is said to be \emph{$(L,H)$-colorable} if it admits an $(L,H)$-coloring.
	\end{defn}
	\begin{remk}
		Note that Definition~\ref{defn:coloring} allows for more than one color to be used at a given vertex of $G$. However, if $G$ is $(L,H)$-colorable, then we can always find an $(L,H)$-coloring that uses exactly one color for each vertex.
	\end{remk}
	
	\begin{defn}
		A graph $G$ is \emph{$k$-correspondence-colorable} (\emph{$k$-c.c.} for short) if it is $(L, H)$-colorable whenever $(L,H)$ is a cover of $G$ and $|L(v)| \geq k$ for all $v \in V(G)$. The least number $k$ such that $G$ is $k$-c.c. is called the \emph{correspondence chromatic number} of $G$ and is denoted by $\chi_c(G)$.
	\end{defn}
	
	The above example shows that $\chi_\ell(G) \leq \chi_c(G)$ for all graphs $G$. As in the case of ordinary vs. list chromatic number, this inequality can be strict. For instance, if $C_{2n}$ is the cycle of length $2n$, then $\chi_\ell(C_{2n}) = 2$, while $\chi_c(C_{2n}) = 3$. Nevertheless, several known upper bounds for list coloring can be transferred to the correspondence coloring setting. For example, it is not hard to show that $\chi_c(G) \leq \Delta+1$ for any graph $G$ with maximum degree $\Delta$. Dvo\v{r}\'{a}k and Postle observed in \cite{DP} that $\chi_c(G) \leq 5$ if $G$ is planar, and $\chi_c(G) \leq 3$ if $G$ is planar and has girth at least $5$; these bounds are the analogs of Thomassen's results about list colorings~\cite{Th1}, \cite{Th2}.
	
	Our first result is an analog of Theorem~\ref{theo:Alon} for correspondence chromatic number.
	
	\begin{theo}\label{theo:lowerbound}
		Let $G$ be a graph with average degree $d \geq 2e$. Then
		$$
			\chi_c(G) \geq \frac{d/2}{\ln(d/2)}.
		$$
	\end{theo}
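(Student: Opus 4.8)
The plan is a first-moment argument over a random cover, applied to $G$ directly rather than to a subgraph. Fix the intended common list size $s$ --- a positive integer, to be chosen at the end --- and write $n = |V(G)|$ and $m = |E(G)| = dn/2$. For each $v$ set $L(v) = \{(v,1),\dots,(v,s)\}$, and build $H$ by choosing, independently for each edge $uv \in E(G)$, a uniformly random bijection $\pi_{uv}\colon[s]\to[s]$ and joining $(u,i)$ to $(v,\pi_{uv}(i))$ for all $i$. Then $(L,H)$ is a cover of $G$, and by the remark after Definition~\ref{defn:coloring}, $G$ is $(L,H)$-colorable if and only if some map $c\colon V(G)\to[s]$ satisfies $\pi_{uv}(c(u))\neq c(v)$ for every edge $uv$; call such a $c$ \emph{good}.

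The core step is to bound the expected number of good maps. For a fixed $c$ and a fixed edge $uv$, the value $\pi_{uv}(c(u))$ is uniform on $[s]$, so $c$ fails on that edge with probability exactly $1/s$; as distinct edges carry independent bijections, $\Pr[c\text{ good}] = (1-1/s)^m$, whence
$$
\mathbb{E}\bigl[\#\{\text{good }c\}\bigr] \;=\; s^{n}(1-1/s)^{m} \;=\; \bigl(s(1-1/s)^{d/2}\bigr)^{n}.
$$
If the base $s(1-1/s)^{d/2}$ is less than $1$, then with positive probability there is no good $c$; fixing such an outcome of the $\pi_{uv}$ produces a cover with all lists of size $s$ that is not colorable, so $\chi_c(G)\ge s+1$. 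Since $-\ln(1-1/s)>1/s$, the inequality $s(1-1/s)^{d/2}<1$ (that is, $\ln s < \tfrac d2(-\ln(1-1/s))$) holds whenever $2s\ln s<d$.

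It remains to take $s$ as large as the constraint $2s\ln s<d$ permits. I would set $s = \lceil s_0\rceil - 1$ with $s_0 = \frac{d/2}{\ln(d/2)}$; because $d\ge 2e$ we have $s_0\ge e$, so $s$ is an integer with $1\le s<s_0$. As $t\mapsto t\ln t$ increases on $[1,\infty)$ and $\ln s_0 = \ln(d/2)-\ln\ln(d/2)$,
$$
2s\ln s \;<\; 2s_0\ln s_0 \;=\; d\Bigl(1-\frac{\ln\ln(d/2)}{\ln(d/2)}\Bigr) \;\le\; d,
$$
the last inequality because $\ln(d/2)\ge 1$ forces $\ln\ln(d/2)\ge 0$. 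Hence $\chi_c(G)\ge s+1 = \lceil s_0\rceil \ge s_0 = \frac{d/2}{\ln(d/2)}$.

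The argument is short, so the difficulty lies in two choices rather than in any single hard step. First, one should resist passing to a subgraph of minimum degree $\ge d/2$ before starting: that would replace the $m = dn/2$ edges in the exponent by only $m' \ge dn'/4$, losing a factor of $2$ in the final bound, so it is essential to use all the edges of $G$. Second, the matchings should be uniformly random \emph{perfect} matchings, so that each edge is a conflict with probability exactly $1/s$ (the largest one can guarantee for every fixed $c$); a weaker randomization, or an attempt to write down an explicit bad cover --- which would essentially require a large complete bipartite subgraph, unavailable in a general graph of bounded average degree --- would not yield the stated constant. The only genuinely computational point is the elementary estimate in the last display, and it is the one place where the hypothesis $d\ge 2e$ is used.
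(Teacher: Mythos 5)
Your proof is correct and follows essentially the same first-moment argument over a uniformly random cover as the paper: the paper likewise builds $H$ by placing an independent uniform perfect matching between $L(u)$ and $L(v)$ for each edge $uv$, bounds the probability that any fixed transversal survives by $(1-1/k)^{|E(G)|}$, and reduces the union bound to the inequality $k\ln k < d/2$, which is exactly your condition $2s\ln s < d$. The only (cosmetic) difference is that you handle the boundary bookkeeping more explicitly with $s = \lceil s_0\rceil - 1$, whereas the paper states the estimate more loosely and implicitly relies on $k$ being an integer.
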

	
	Theorem~\ref{theo:lowerbound} shows that the correspondence chromatic number of a graph grows with the average degree much faster than the list chromatic number and, in fact, is only a logarithmic factor away from the trivial upper bound $\chi_c(G) \leq d+1$ for $d$-regular $G$.
	
	A celebrated theorem of Johansson~\cite{Johansson} asserts that for any triangle-free graph $G$ with maximum degree~$\Delta$, $\chi_\ell(G) = O(\Delta/\ln\Delta)$. Our next result shows that the same upper bound holds for correspondence coloring as well.
	
	\begin{theo}\label{theo:upperbound}
		There exists a positive constant $C$ such that for any triangle-free graph $G$ with maximum degree $\Delta$,
		$$
			\chi_c(G) \leq C \frac{\Delta}{\ln\Delta}.
		$$
	\end{theo}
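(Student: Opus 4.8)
The plan is to adapt to covers the probabilistic proof of Johansson's theorem, that is, the semi-random (``nibble'') method driven by an iterated application of the Lov\'{a}sz Local Lemma. Fix a triangle-free graph $G$ of maximum degree $\Delta$. We may assume $\Delta$ exceeds any prescribed absolute constant, since otherwise $\chi_c(G)\le\Delta+1$ already yields the bound for suitable $C$. Let $(L,H)$ be a cover of $G$ with $|L(v)|\ge k\coloneqq\lceil C\Delta/\ln\Delta\rceil$ for all $v$; after deleting colors (i.e.\ vertices of $H$) we may assume $|L(v)|=k$ everywhere. The key structural observation is that a cover is \emph{locally list-like}: if $v$ is assigned a color $x\in L(v)$, then at each neighbour $u$ at most one color — the $H$-partner of $x$ in $L(u)$, if it exists — becomes forbidden, precisely as in list coloring. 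Every estimate in Johansson's argument refers only to this local matching structure and never to equality of colors across vertices; so the whole proof transcribes once ``two adjacent vertices receive the same color'' is systematically replaced by ``two adjacent vertices receive $H$-matched colors''.

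\textbf{The iteration.} We build the $(L,H)$-coloring in rounds. After round $i$ we keep a partial proper coloring and, for each uncolored $v$, a residual list $L_i(v)\subseteq L(v)$ of the colors not yet $H$-forbidden by a colored neighbour, maintaining $|L_i(v)|\ge\ell_i$ together with an upper bound $d_i$ on the number of uncolored neighbours of $v$ and on $\max_{x\in L_i(v)}\bigl|\{u\sim v:\ u\text{ uncolored, the }H\text{-partner of }x\text{ lies in }L_i(u)\}\bigr|$. The sequences $(\ell_i)$ and $(d_i)$ are defined by the recursion in Johansson's proof, with $\ell_0=k$ and $d_0=\Delta$, so that $\ell_i/d_i$ is increasing, $\ell_i$ stays $\Delta^{\Omega(1)}$, and $\ell_T>d_T$ after $T=\operatorname{polylog}(\Delta)$ rounds. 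Round $i+1$ is the standard nibble: independently activate each uncolored $v$ with probability $p_i\asymp1/\ln d_i$; each activated $v$ chooses a uniformly random color $x_v\in L_i(v)$; then $v$ is permanently colored $x_v$ unless some neighbour $u$ was activated with $x_ux_v\in E(H)$; finally, delete the newly colored vertices and remove from each remaining $L_i(v)$ every color whose $H$-partner at a neighbour was just used. Once $\ell_T>d_T$, the leftover graph (in which every vertex has fewer uncolored neighbours than colors in its residual list) is finished greedily.

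\textbf{Maintaining the invariants.} Fix an uncolored $v$. A color $x\in L_i(v)$ is deleted only if one of its at most $d_i$ ``threatening'' neighbours is activated, picks the unique color $H$-matched to $x$, and keeps it. This is the point at which triangle-freeness is indispensable: neighbours of $v$ that happen to pick colors $H$-matched to the \emph{same} $x$ are pairwise non-adjacent, hence cannot spoil one another's tentative colors, so they all get colored while together deleting just the one color $x$. This ``wasted picks'' phenomenon — which collapses for graphs with triangles, e.g.\ in $K_{\Delta+1}$ such neighbours would be adjacent and destroy each other, making no progress — is exactly what lets the residual list shrink more slowly than the degree parameter, and the quantitative form is Johansson's. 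The parameter $d_i$ drops for $v$ because each threatening neighbour $u$ is itself colored with positive probability and because the relevant $H$-partner may leave $L_i(u)$, with triangle-freeness again ensuring that whether $u$ keeps its color is decided inside $N_G(u)$, well away from $v$, so that these probabilities take their independent values. Both $|L_{i+1}(v)|$ and the new local degree are functions of the tentative choices of the $\Delta^{O(1)}$ vertices within distance $2$ of $v$, each influencing them by a bounded amount in a manner for which Talagrand's inequality applies, so each concentrates around its mean with deviation probability $\le\exp(-\Delta^{\Omega(1)})$. The corresponding bad events depend only on distance-$\le2$ choices, hence each is mutually independent of all but $\Delta^{O(1)}$ of the others; since $\exp(-\Delta^{\Omega(1)})\cdot\Delta^{O(1)}\to0$, the Local Lemma yields an outcome of round $i+1$ in which all invariants hold simultaneously. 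This closes the induction, the greedy step completes the coloring, and $C$ is chosen large enough (independently of $G$) to validate the one-round estimates throughout.

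\textbf{The main obstacle.} Conceptually little is new — the ``locally list-like'' principle provides a mechanical dictionary translating each step of Johansson's argument to covers. The actual work is performing that translation faithfully through Johansson's rather delicate analysis: pinning down the sequences $\ell_i$, $d_i$ and the probabilities $p_i$, verifying the one-round expectation bounds (above all the lower bound on the retained list size via the wasted-picks count), setting up the Talagrand certificates and bounded-difference constants, and bookkeeping the Local Lemma dependencies at every round. I do not expect any genuinely new phenomenon, but the point demanding the most care is stating the triangle-free repetition estimate correctly: because $H$ restricted to any edge of $G$ is a matching, each $x\in L_i(v)$ has at most one $H$-partner in any neighbour's residual list, so several neighbours choosing $H$-partners of a common $x$ waste themselves exactly as several neighbours choosing a common list-color would — this is the identity on which the whole argument turns.
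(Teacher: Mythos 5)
Your outline is the right high-level strategy and it is the same one the paper uses: Johansson's semi-random nibble, transcribed to covers via the observation that each colored neighbour kills at most one colour (the $H$-partner), with the Local Lemma closing each round. The paper, however, follows the Molloy--Reed (Chapter~13) \emph{probability-weighted} incarnation of the nibble rather than the hard-list one you sketch: instead of residual lists $L_i(v)$ and a degree bound $d_i$, it maintains a function $p\colon V(H)\to[0,\hat p]$, tracks $p(v)=\sum_{x\in L(v)}p(x)$, the edge weight $p(uv)$, and an entropy $Q(v)=\sum_x p(x)\ln(1/p(x))$; the ``reduct'' step thins $p$ rather than literally deleting colours, and the endgame is a probabilistic lemma (a choice-function theorem from the cited preprint, in the spirit of the Local Lemma) rather than a greedy completion. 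These are two well-known bookkeeping schemes for the same proof and either could be made to work, so I would not call the route ``genuinely different.''

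The one place where your proposal has a real gap is precisely the step the paper flags as the only one that does \emph{not} transcribe mechanically from the list-colouring argument: the concentration of the residual degree (and, in your scheme, of $|L_{i+1}(v)|$ and the local degree). You assert that these quantities are ``functions of the tentative choices of the $\Delta^{O(1)}$ vertices within distance $2$ of $v$, each influencing them by a bounded amount in a manner for which Talagrand's inequality applies.'' That Lipschitz claim is false as stated. A vertex $w$ at distance $2$ from $v$ can be adjacent to up to $\Delta$ of $v$'s neighbours (triangle-freeness forbids $w\sim v$, not large codegree --- think of $K_{\Delta,\Delta}$). Changing $w$'s tentative colour (or activation) can flip, for \emph{every} such common neighbour $u$, whether $u$ is successfully coloured in this round, and hence can change $d_{i+1}(v)$ and $|L_{i+1}(v)|$ by $\Theta(\Delta)$. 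With Lipschitz constant $c=\Theta(\Delta)$, neither Talagrand's inequality nor the bounded-differences inequality gives a deviation bound small enough to beat the $\Delta^{O(1)}$ Local-Lemma dependency count. In the ordinary list-colouring version one can exploit the fact that the matchings are all identity, which gives extra structure to control this; for a general cover that structure is gone. The paper's fix (Lemma on degree concentration, \S 3.4) is a two-stage argument: first show that with probability $1-\Delta^{-6}$ no colour in the second neighbourhood is ``saturated'' (has more than $\Delta^{1/10}$ chosen $H$-neighbours), then condition on the outcome in the first neighbourhood and apply Talagrand in the second neighbourhood only, where the non-saturation hypothesis caps the Lipschitz constant at $\Delta^{1/10}$; a separate bounded-differences bound controls the conditional mean. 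Without some argument of this kind your round-by-round concentration step does not go through, and this is the one genuinely new ingredient required by the correspondence setting.
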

	
	Combining Theorems~\ref{theo:lowerbound} and \ref{theo:upperbound}, we immediately obtain the following corollary.
	
	\begin{corl}
		There exist positive constants $c$ and $C$ such that for any $d$-regular triangle-free graph $G$, we have
		$$
			c \frac{d}{\ln d} \leq \chi_c(G) \leq C \frac{d}{\ln d}.
		$$
	\end{corl}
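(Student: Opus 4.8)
The plan is to obtain both bounds directly from Theorems~\ref{theo:lowerbound} and~\ref{theo:upperbound}, using the single observation that for a $d$-regular graph $G$ the average degree, the maximum degree, and $d$ all coincide. Throughout I would assume $d \geq 2$, since for $d \in \{0,1\}$ the quantity $d/\ln d$ is either $0$ or undefined and there is nothing to state.

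The upper bound requires no new work: applying Theorem~\ref{theo:upperbound} with $\Delta = d$ gives $\chi_c(G) \leq C\,\Delta/\ln\Delta = C\,d/\ln d$ with the very same constant $C$. For the lower bound I would split into two ranges. If $d \geq 2e$, then Theorem~\ref{theo:lowerbound} applies and yields $\chi_c(G) \geq (d/2)/\ln(d/2)$; since $\ln(d/2) \leq \ln d$, this is at least $\frac{1}{2} \cdot d/\ln d$, so $c = \frac{1}{2}$ works in this range. If instead $2 \leq d < 2e$, then $d \in \{2,3,4,5\}$, and since a $d$-regular graph with $d \geq 1$ contains an edge, we have $\chi_c(G) \geq \chi_\ell(G) \geq \chi(G) \geq 2$; as one checks directly that $2 \geq \frac{1}{2} \cdot d/\ln d$ (equivalently $4\ln d \geq d$) for each $d \in \{2,3,4,5\}$, the constant $c = \frac{1}{2}$ again suffices. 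Taking $c = \frac{1}{2}$ and $C$ as in Theorem~\ref{theo:upperbound} thus works for all $d \geq 2$.

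There is no genuine obstacle here: every step is either a direct invocation of a theorem stated above or an elementary inequality. The only point demanding a little care is the behavior at small values of $d$, where the asymptotic statements do not literally apply; but, as indicated, these finitely many cases are absorbed by the trivial lower bound $\chi_c(G) \geq 2$ and do not even force $c$ below $\frac{1}{2}$.
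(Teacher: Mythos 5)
Your proof is correct and takes the same route as the paper, which simply asserts the corollary follows immediately from combining Theorems~\ref{theo:lowerbound} and~\ref{theo:upperbound} for a $d$-regular graph (where average degree and maximum degree both equal $d$). You have additionally, and correctly, filled in the small-$d$ cases that Theorem~\ref{theo:lowerbound} does not literally cover, absorbing them with the trivial bound $\chi_c(G) \geq 2$.
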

	
	In other words, the degree of a regular triangle-free graph determines its correspondence chromatic number up to a constant factor.
	
	The rest of the paper is organized as follows. In Section~\ref{sec:lowerbound} we prove Theorem~\ref{theo:lowerbound}. The proof is short and only uses the first moment method. Theorem~\ref{theo:upperbound} is proved in Section~\ref{sec:upperbound}. Most of the proof is done via adjusting Johansson's proof of the analogous result for ordinary graph colorings to the setting of correspondence colorings. However, some technical features of Johansson's proof have to be modified in order to work for correspondence colorings.
	
	\section{Proof of Theorem~\ref{theo:lowerbound}}\label{sec:lowerbound}
	
	In this section we prove Theorem~\ref{theo:lowerbound}. Suppose that
	$$
		k \leq \frac{d/2}{\ln(d/2)}.
	$$
	Let $\{L(v)\}_{v \in V(G)}$ be a collection of pairwise disjoint sets, each of size $k$. Randomly construct a graph $H$ with vertex set $\bigcup_{v \in V(G)} L(v)$ as follows: For each $uv \in E(G)$, connect $L(u)$ and $L(v)$ by a perfect matching chosen independently and uniformly at random. By construction, $(L, H)$ is a cover of $G$.
		
	Consider any set $I \subseteq \bigcup_{v \in V(G)} L(v)$ such that $|I \cap L(v)| = 1$ for all $v \in V(G)$. If $uv \in E(G)$, then the probability that the only vertex in $I\cap L(v)$ and the only vertex in $I\cap L(u)$ are nonadjacent in $H$ is exactly $1 - 1/k$. Therefore, the probability that $I$ is a proper $(L,H)$-coloring of $G$ is exactly $(1 - 1/k)^{|E(G)|} \leq e^{-|E(G)|/k}$. Thus, the probability that there exists at least one $(L,H)$-coloring is at most
	$$
		k^{|V(G)|} \cdot e^{-|E(G)|/k} = e^{|V(G)|\ln k - |E(G)|/k}.
	$$
	We claim that it is less than $1$. Indeed, it is enough to show that
	$$
		k\ln k < \frac{|E(G)|}{|V(G)|} = d/2.
	$$
	But
	$$
		k \ln k < \frac{d/2}{\ln (d/2)} \cdot \ln (d/2) = d/2,
	$$
	as desired.

	\section{Proof of Theorem~\ref{theo:upperbound}}\label{sec:upperbound}
	
	We will prove Theorem~\ref{theo:upperbound} in the following explicit form.
	
	\begin{theobis}{theo:upperbound}\label{theo:upperbound1}
		There exists a constant $\Delta_0$ such that for any triangle-free graph $G$ with maximum degree at most $\Delta \geq \Delta_0$,
		$$
			\chi_c(G) \leq \left\lceil\frac{120 \Delta}{\ln\Delta}\right\rceil.
		$$
	\end{theobis}
	
	 The proof of Theorem~\ref{theo:upperbound1} follows closely Johansson's proof of the analogous result for ordinary graph colorings, with only a few minor adjustments. Here we use the version of that proof given in Chapter 13 of~\cite{MR}. The only step that is significantly different is the proof of Lemma~\ref{lemma:degconc}, where the approach of \cite{MR} cannot be directly adapted for correspondence colorings.
	
	\subsection{Outline of the proof}
	
	From now on we will be working under the assumption that $\Delta$ is large, i.e., all lemmas are true only for $\Delta \geq \Delta_0$ for some constant $\Delta_0$.
	
	Our goal is to reduce Theorem~\ref{theo:upperbound1} to the following lemma.
	
	\begin{defn}
		Suppose that $(L,H)$ is a cover of $G$ and $p\colon V(H) \to \mathbb{R}_{\geq 0}$ is an assignment of nonnegative real numbers to the vertices of $H$. For each $v \in V(G)$, define
		$$
			p(v) \coloneqq \sum_{x \in L(v)} p(x)
		$$
		and for each $uv \in E(G)$, define
		$$
			p(uv) \coloneqq \sum_{xy \in E(L(u), L(v))} p(x)p(y).
		$$
	\end{defn}
	
	\begin{lemma}[Probabilistic Coloring Lemma]\label{lemma:final}
		Let $(L, H)$ be a cover of $G$ and let $p\colon V(H) \to \mathbb{R}_{\geq 0}$ be an assignment of nonnegative real numbers to the vertices of $H$. Suppose that there exists a positive constant $\delta$ such that
		\begin{enumerate}
			\item for each $v \in V(G)$, $p(v) \geq \delta$;
			\item for each $v \in V(G)$ and $x \in L(v)$, $p(x) \leq \delta/2$; and
			\item for each $v \in V(G)$, $\sum_{u \in N_G(v)} p(uv) \leq \delta^2/4$.			
		\end{enumerate}
		Then $G$ is $(L, H)$-colorable. 
	\end{lemma}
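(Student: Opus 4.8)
The plan is to follow the blueprint of Johansson's theorem as presented in Chapter~13 of~\cite{MR}: rather than trying to produce an $(L,H)$-colouring in one shot, I would iterate a semi-random \emph{wasteful colouring procedure} that, in each round, permanently colours some vertices of $G$ and passes to a cover together with a weighting of the remaining vertices that again satisfies~(1)--(3), but with a parameter that has improved; after enough rounds the residual instance is so sparse that an $(L,H)$-colouring can be completed directly. A one-shot application of the Lov\'asz Local Lemma to the ``one random colour per vertex'' assignment, with a bad event per edge, just falls short here by a constant factor, and recovering that factor is exactly what the iteration is for.

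One round runs as follows. Normalise $p$ to a probability distribution $p_v(\cdot)=p(\cdot)/p(v)$ on each $L(v)$ and, independently over $v\in V(G)$, draw a \emph{tentative colour} $x_v\in L(v)$ from $p_v$. Keep $v$ \emph{permanently coloured} with $x_v$ unless some neighbour $u$ has $x_ux_v\in E(H)$ (as in~\cite{MR}, one inserts an ``equalising'' coin flip so that the probability of keeping one's colour does not depend too strongly on the degree). Then delete the permanently coloured vertices; for each surviving vertex $w$, delete from $L(w)$ every colour that is $H$-adjacent to a colour used on a permanently coloured neighbour of $w$; and reweight the surviving colours of $w$ to obtain $p'$. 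The reweighting has to be chosen so that, in expectation, the new ``budget'' $p'(w)$ grows relative to a new parameter $\delta'$ while the new conflict sums $\sum_{u}p'(uw)$ shrink relative to $(\delta')^2$; checking this is a direct computation with~(1)--(3), using repeatedly that in a cover each neighbour $u$ of $w$ clashes with at most one colour of $w$.

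The substance of a round is to force the new parameters close to their expectations \emph{at every surviving vertex simultaneously}. I would (i) compute $\mathbb{E}[p'(w)]$ and $\mathbb{E}[\sum_u p'(uw)]$ and verify that the inductive hypothesis is restored in expectation with an improved $\delta'$; (ii) prove concentration, noting that both quantities are determined by the tentative colours of the vertices within distance two of $w$, each of bounded influence, so that Talagrand's inequality (or a bounded-differences estimate) gives a large-deviation probability that is exponentially small in a power of $\ln\Delta$; and (iii) apply the Local Lemma to the bad events ``$w$'s new parameters deviate too much'', each of which is determined by tentative colours within distance two of $w$ and hence mutually independent of all but $O(\Delta^{4})$ of the others. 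Iterating $O(\log\Delta)$ successful rounds drives $\delta$ up until, at every vertex, the total weight dwarfs the total conflict weight; at that point a final Local Lemma application to the random colour assignment, with a bad event per edge and now with every bad-event probability genuinely small, yields an $(L,H)$-colouring of the residual graph, which extends through the permanently coloured vertices to all of $G$.

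The main obstacle is the concentration step~(ii), and this is where the correspondence setting genuinely differs from the ordinary/list one. In the list version one controls a fixed colour $c$ by tracking how many neighbours of $w$ still have $c$ available and uncoloured; there is no such global colour in a cover, so this bookkeeping must be replaced by a per-edge quantity --- essentially the probability, over the random trial, that a neighbour $u$ receives the unique colour of $u$ clashing with a given colour $x\in L(w)$ --- and one has to check both that this quantity is what governs the new conflict sum $\sum_u p'(uw)$ and that it is Lipschitz enough to feed Talagrand's inequality. Getting this substitution right is the content of Lemma~\ref{lemma:degconc}, and confirming that the resulting deviation bounds remain strong enough to drive the Local Lemma at \emph{every} round is the crux; the remainder is a faithful transcription of the Molloy--Reed argument with $\delta$, $\delta/2$, and $\delta^2/4$ in the roles of the list-size parameters.
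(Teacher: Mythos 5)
There is a fundamental mismatch between what Lemma~\ref{lemma:final} asks for and what you propose to prove. Your plan is, in essence, to re-run the entire Johansson nibble (tentative colours, equalising coin flips, reweighting, concentration via Talagrand, $O(\log\Delta)$ rounds) inside the proof of this lemma. But that machinery is the proof of the whole of Theorem~\ref{theo:upperbound}, not of this lemma; in the paper's architecture, Lemma~\ref{lemma:final} is precisely the \emph{endgame} statement that gets applied once, after the iterated reduct lemma (Lemma~\ref{lemma:reduct}) has already brought the parameters into the regime described by hypotheses (1)--(3). Conflating the two puts a circularity into the structure. Even leaving aside the architecture, your proposed proof cannot be run on the lemma as stated: the lemma has no triangle-freeness hypothesis, no maximum-degree parameter, and no ``$\Delta/\ln\Delta$'' list sizes, so the reweighting, the concentration step, and the count of $O(\Delta^4)$ dependent bad events are all undefined here.

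Your instinct that the naive one-shot Local Lemma, with one random colour per vertex and a bad event per edge, ``just falls short'' is correct as far as it goes, but the conclusion you draw --- that iteration is needed --- is not. The fix is to randomise differently: include each colour $x\in V(H)$ in a random set $M$ independently with probability $2p(x)/\delta$ (hypothesis (2) guarantees this is a legal probability), so that each $L(v)$ contributes a random \emph{subset} of colours rather than exactly one. Then $\mathbf{E}\bigl(|M\cap L(v)|\bigr)=2p(v)/\delta\geq 2$ by (1), while the expected number of forbidden pairs $\{x,y\}$ (for $xy\in E(H)$) meeting $L(v)$ and contained in $M$ is $(4/\delta^2)\sum_{u\in N_G(v)}p(uv)\leq 1$ by (3). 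These two bounds are exactly the hypotheses of the cited Theorem~\ref{theo:choice} (Theorem~20 of~\cite{Bernshteyn}, a Lov\'asz-Local-Lemma-type tool for choice functions with forbidden partial choices), which then yields a choice function avoiding all $\{x,y\}$ with $xy\in E(H)$, i.e.\ an $(L,H)$-colouring. No iteration, no triangle-freeness, and no concentration inequalities are needed at this stage; hypotheses (1)--(3) are calibrated so that this one-shot argument closes in a few lines.
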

	
	We prove Lemma~\ref{lemma:final} in Subsection~\ref{subsec:final}.
	
	For technical reasons, we want to discard the colors $x$ for which $p(x)$ is too large. To do this, we fix a parameter $\hat{p}$ and introduce the following definitions.
	
	\begin{defn}[Moderate colors; moderate colorings; nice tuples]
		Suppose that $(L, H)$ is a cover of $G$ and $p\colon V(H) \to [0;\hat{p}]$. Call a color $x \in V(H)$ \emph{$p$-moderate} if $p(x) \in (0; \hat{p})$. If $M \subseteq V(H)$ is the set of all moderate colors, then for each $v \in V(G)$, define
		$$
			p_m(v) \coloneqq \sum_{x \in L(v) \cap M} p(x)
		$$
		and for each $uv \in E(G)$, define
		$$
			p_m(uv) \coloneqq \sum_{\substack{xy \in E(L(u), L(v));\\ x,y \in M}} p(x)p(y).
		$$
		Call an $(L,H)$-coloring $I$ \emph{$p$-moderate} if $I \subseteq M$. Finally, the tuple $(G, L, H, p)$ is \emph{nice} if there exists a positive constant $\delta$ such that
		\begin{enumerate}
			\item for each $v \in V(G)$, $p_m(v) \geq \delta$;
			\item for each $v \in V(G)$ and $x \in L(v) \cap M$, $p(x) \leq \delta/2$; and
			\item for each $v \in V(G)$, $\sum_{u \in N_G(v)} p_m(uv) \leq \delta^2/4$.			
		\end{enumerate}
	\end{defn}
	
	Lemma~\ref{lemma:final} immediately implies that if the tuple $(G, L, H, p)$ is nice, then $G$ admits a $p$-moderate $(L, H)$-coloring.
	
	\begin{defn}[Reducts]
		Let $(L, H)$ be a cover of $G$ and $p\colon V(H) \to [0;\hat{p}]$. A tuple $(G', L', H', p')$, where $G'$ is an induced subgraph of $G$, $L'(v) = L(v)$ for all $v \in V(G')$, $H'$ is the subgraph of $H$ induced by the set of vertices~$\bigcup_{v \in V(G')} L'(v)$, and $p'\colon V(H') \to [0; \hat{p}]$, is a \emph{reduct} of $(G, L, H, p)$ if every $p'$-moderate $(L',H')$-coloring of $G'$ extends to a $p$-moderate $(L,H)$-coloring of $G$.
	\end{defn}
	
	Reducts are useful because if $(G, L, H, p)$ has a nice reduct, then $G$ admits a $p$-moderate $(L,H)$-coloring.
	
	We need to introduce one more parameter, namely the \emph{entropy} of a vertex of $G$.
	
	\begin{defn}[Entropy]
		Suppose that $(L, H)$ is a cover of $G$ and $p\colon V(H) \to \mathbb{R}_{\geq 0}$. For each $v \in V(G)$, define the \emph{entropy} of $v$ to be
		$$
			Q(v) \coloneqq \sum_{x \in L(v)} p(x) \ln \frac{1}{p(x)},
		$$
		where we take $x \ln(1/x) = 0$ for $x = 0$.
	\end{defn}
	
	\begin{remk}
		To simplify notation, when talking about a tuple denoted $(G', L', H', p')$ (or $(G'', L'', H'', p'')$), we will denote the corresponding entropy by $Q'$ (resp. $Q''$).
	\end{remk}
	
	We will use the entropy of $v$ to control the difference between $p(v)$ and $p_m(v)$. Namely, we have the following:
	
	\begin{lemma}[Using the entropy]\label{lemma:entropy}
		Suppose that $G$ is a triangle-free graph with maximum degree at most~$\Delta$, $(L, H)$ is a cover of $G$ such that $|L(v)| = \left\lceil 120\Delta/\ln \Delta \right\rceil \eqqcolon k$ for all $v \in V(G)$, and $p\colon V(H) \to [0; \hat{p}]$, where $\hat{p} = \Delta^{-11/12}$. Moreover, assume that
		\begin{enumerate}
			\item for each $v \in V(G)$, $|p(v) - 1| \leq \Delta^{-1/10}$;
			\item for each $v \in V(G)$, $Q(v) \geq \ln k - \ln\Delta/40$;
			and
			\item for each $x \in V(H)$, either $p(x) = 0$ or $p(x) \geq 1/k$.
		\end{enumerate}
		Then for each $v \in V(G)$,
		$$
			p_m(v) \geq \frac{7}{10} - o(1).
		$$
	\end{lemma}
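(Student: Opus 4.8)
The plan is to reduce everything to bounding the number of colors $x \in L(v)$ with $p(x) = \hat p$. Since $p$ takes values in $[0;\hat p]$, a color $x$ is non-moderate precisely when $p(x) \in \{0,\hat p\}$, and the colors with $p(x) = 0$ do not contribute to $p(v) - p_m(v)$. Hence, writing $a$ for the number of colors $x \in L(v)$ with $p(x) = \hat p$, we get
\[
p(v) - p_m(v) \;=\; a\hat p .
\]
So, by hypothesis (1), it suffices to prove $a\hat p \leq 3/10 + o(1)$: then $p_m(v) = p(v) - a\hat p \geq (1 - \Delta^{-1/10}) - 3/10 - o(1) = 7/10 - o(1)$.

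To bound $a\hat p$, I will play the entropy lower bound in hypothesis (2) against a matching entropy upper bound. Split $Q(v) = \sum_{x \in L(v)} p(x)\ln(1/p(x))$ according to the value of $p(x)$: colors with $p(x) = 0$ contribute $0$; the $a$ colors with $p(x) = \hat p$ contribute $a\hat p\ln(1/\hat p) = \tfrac{11}{12}(a\hat p)\ln\Delta$, since $\ln(1/\hat p) = \tfrac{11}{12}\ln\Delta$; and for each moderate color $x \in L(v)\cap M$ we have $p(x) \geq 1/k$ by hypothesis (3), so $\ln(1/p(x)) \leq \ln k$ and these colors contribute at most $\ln k \sum_{x \in L(v)\cap M} p(x) = p_m(v)\ln k$. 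Using $a\hat p = p(v) - p_m(v)$, this gives
\[
Q(v) \;\leq\; \tfrac{11}{12}(a\hat p)\ln\Delta + p_m(v)\ln k \;=\; \tfrac{11}{12}p(v)\ln\Delta + p_m(v)\bigl(\ln k - \tfrac{11}{12}\ln\Delta\bigr) .
\]
The point is that the heavy colors are ``entropy-inefficient'': each unit of their mass contributes only $\tfrac{11}{12}\ln\Delta$ to $Q(v)$, whereas the hypothesis forces $Q(v)$ to be close to $\ln k \approx \ln\Delta$, so there cannot be too much heavy mass.

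Now set $D \coloneqq \ln k - \tfrac{11}{12}\ln\Delta$. Since $k = \lceil 120\Delta/\ln\Delta\rceil$, we have $\ln k = \ln\Delta - \ln\ln\Delta + O(1)$, so $D = (\tfrac{1}{12} - o(1))\ln\Delta > 0$ for $\Delta$ large. Combining the last display with hypothesis (2), $Q(v) \geq \ln k - \tfrac{1}{40}\ln\Delta$, and solving for $p_m(v)$ (dividing by $D > 0$),
\[
p_m(v) \;\geq\; 1 - \frac{\tfrac{1}{40}\ln\Delta + \tfrac{11}{12}\bigl(p(v) - 1\bigr)\ln\Delta}{D} \;\geq\; 1 - \frac{\bigl(\tfrac{1}{40} + o(1)\bigr)\ln\Delta}{D},
\]
where the second step uses hypothesis (1) in the form $p(v) - 1 \leq \Delta^{-1/10}$, together with $\tfrac{11}{12}\Delta^{-1/10}\ln\Delta = o(\ln\Delta)$. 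Since $D = (\tfrac{1}{12} - o(1))\ln\Delta$, the fraction tends to $(1/40)/(1/12) = 3/10$, so $p_m(v) \geq 7/10 - o(1)$, as required.

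The argument is short, and the only step needing a little care is the last one: one has to check that the $\ln\ln\Delta$ gap between $\ln k$ and $\ln\Delta$ and the $\Delta^{-1/10}$ slack in $p(v)$ both wash out into $o(1)$ terms, so that the ratio comes out as exactly $(1/40)/(1/12) = 3/10$ and the constant $7/10$ is reached. (Triangle-freeness of $G$ plays no role in this lemma — only the list size $k$ and the three numerical hypotheses are used.)
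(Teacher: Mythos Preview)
Your proof is correct and follows essentially the same approach as the paper's: both bound the heavy mass $a\hat p$ by comparing the entropy lower bound $Q(v)\ge\ln k-\tfrac{1}{40}\ln\Delta$ against the fact that each heavy color contributes only $\tfrac{11}{12}\ln\Delta$ per unit of mass while moderate colors (via $p(x)\ge 1/k$) contribute at most $\ln k$. The paper phrases this as lower-bounding $\sum_{x} p(x)\ln(kp(x))$ by its $B$-terms, whereas you phrase it as upper-bounding $Q(v)$; these are the same inequality rearranged, and both arrive at $a\hat p\le 3/10+o(1)$ and hence $p_m(v)\ge 7/10-o(1)$.
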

	
	Lemma~\ref{lemma:entropy} is proved in Subsection~\ref{subsec:entropy}.
	
	The next lemma is the core of the proof. Informally, it asserts that if $p(v) \approx 1$ for all $v \in V(G)$, $p(uv) = O(1/k)$ for all $uv \in E(G)$, and $Q(v) = \Omega(\ln \Delta)$ for all $v \in V(G)$, then there is a reduct $(G', L', H', p')$ of $(G, L, H, p)$ such that $p'(v) \approx p(v)$ for all $v \in V(G)$, $p'(uv) \approx p(uv)$ for all $uv \in E(G)$, and $Q'(v) \approx Q(v)$ for all $v \in V(G)$, but $\deg_{G'}(v)$ is \emph{much less} than $\deg_G(v)$ for all $v \in V(G)$. 
	
	\begin{lemma}[Good reducts]\label{lemma:reduct}
		Suppose that $G$ is a triangle-free graph with maximum degree at most $\Delta$, $(L, H)$ is a cover of $G$ such that $|L(v)| = \left\lceil 120\Delta/\ln \Delta\right\rceil \eqqcolon k$ for all $v \in V(G)$, and $p\colon V(H) \to [0; \hat{p}]$, where $\hat{p} \coloneqq \Delta^{-11/12}$. Moreover, assume that
		\begin{enumerate}
			\item for each $v \in V(G)$, $|p(v) - 1| \leq \Delta^{-1/10}$;
			\item for each $uv \in E(G)$, $p(uv) \leq \sqrt{2} k^{-1}$;
			\item for each $v \in V(G)$, $Q(v) \geq \ln k - \ln\Delta/40$; and
			\item for each $x \in V(H)$, either $p(x) = 0$ or $p(x) \geq 1/k$.
		\end{enumerate}
		Then there exists a reduct $(G', L', H', p')$ of $(G, L, H, p)$ such that
		\begin{enumerate}
			\item for each $v \in V(G')$, $|p'(v) - p(v)| \leq \Delta^{-1/6}$;
			\item for each $uv \in E(G')$, $p'(uv)\leq p(uv) + k^{-1}\Delta^{-1/3}$;
			\item for each $v \in V(G')$, $Q'(v) \geq Q(v) - 2\deg_G(v)/(k\ln\Delta) - \ln\Delta \cdot \Delta^{-1/6}$;
			\item for each $v \in V(G')$, $\deg_{G'}(v) \leq \deg_G(v) \cdot(1-2/(3\ln\Delta)) + \Delta^{2/3}$; and
			\item for each $x \in V(H')$, either $p'(x) = 0$ or $p'(x) \geq 1/k$.
		\end{enumerate}
	\end{lemma}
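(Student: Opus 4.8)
The plan is to adapt the ``wasteful colouring'' argument underlying Johansson's theorem, in the form of Chapter~13 of~\cite{MR}, to covers. The reduct $(G',L',H',p')$ is produced by one round of a random colouring procedure, run exactly as in~\cite{MR} but with the matchings of $H$ playing the role of shared colours: each vertex $v\in V(G)$ is independently \emph{activated} with an appropriate probability $\zeta$ of order $1/\ln\Delta$ (with the usual ``equalising'' coin flip, so that the probability of being coloured is the same at every vertex), an activated vertex picks a tentative colour $x\in L(v)$ with probability proportional to $p(x)$, and a tentative colour is \emph{retained} unless some activated neighbour picks the colour matched to it. Let $S$ be the set of vertices receiving a retained colour; set $G'\coloneqq G-S$, let $L',H'$ be the restrictions of $L,H$ to $V(G')$, and let $p'$ be obtained from $p$ by the update rule of~\cite{MR}: colours of $G'$-vertices that would conflict with the retained colours on $S$ are forbidden (assigned value $0$, so that $p'$-moderate colourings avoid them), and the surviving values are adjusted by the equalising/renormalising step and then capped at $\hat p$. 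Since the renormalisation scales values upward and $p(x)\geq 1/k$ by hypothesis~4, conclusion~(5) holds for every outcome of the procedure.

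Next I would check that every outcome gives a reduct, and then that conclusions~(1)--(4) hold for a typical outcome. For the reduct property: the retained colours on $S$ form an independent set of $H$ (on adjacent vertices of $S$ they are, by definition of ``retained'', not matched, hence non-adjacent), no surviving colour of a $G'$-vertex is adjacent to a retained colour on $S$, and a $p'$-moderate colouring of $G'$ uses only colours $x$ with $p'(x)\in(0,\hat p)$ and so avoids every forbidden colour; thus such a colouring, extended by the retained colours on $S$, is a $p$-moderate $(L,H)$-colouring of $G$. This holds for every outcome, so it remains to show that~(1)--(4) hold with positive probability.

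For conclusions~(1)--(3) I would compute the relevant expectations and invoke Talagrand's concentration inequality. The probability that a colour $x\in L(v)$ survives (is not forbidden) is $\mathrm{Keep}_v(x)=\prod_{u\in N_G(v)}\big(1-\Pr[u\text{ retains the colour matched to }x]\big)$; here triangle-freeness is what makes the events ``$u$ forbids $x$'' for distinct neighbours $u$ essentially independent, so that $\mathrm{Keep}_v(x)$ is close to its first-order estimate. Summing $p(x)$ over surviving colours and using hypotheses~1 and~2 gives, after renormalisation, conclusion~(1); an analogous second-moment computation over the matchings at $v$, again using hypothesis~2, gives conclusion~(2). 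Conclusion~(3) is the delicate one: in the expansion of $\mathbb{E}[Q'(v)]$ the first-order effect of forbidding colours cancels against the renormalising term, and the surviving, second-order contribution is controlled --- this is the heart of Johansson's argument --- precisely by hypothesis~3 (which, via $\ln k-Q(v)\leq\ln\Delta/40$, forces $p$ to be close enough to uniform that the weights $\ln\frac1{p(x)}$ are nearly constant) together with hypothesis~4 (the bound $p(x)\geq 1/k$, which also bounds the one-sided Lipschitz constant of $Q'(v)$ and hence the concentration slack, yielding the $\ln\Delta\cdot\Delta^{-1/6}$ error term). The degree-drop conclusion~(4) is different: its expectation, $\mathbb{E}[\deg_{G'}(v)]=\sum_{u\in N_G(v)}(1-\Pr[u\in S])$, is easy to evaluate, since the near-uniformity from hypothesis~3 and the bound from hypothesis~2 guarantee $\Pr[u\in S]\geq 2/(3\ln\Delta)$, but the concentration around it cannot be taken from~\cite{MR} verbatim, and supplying it is exactly the role of Lemma~\ref{lemma:degconc}.

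Finally, each bad event ``conclusion~$i$ fails at $v$'' (or at an edge $uv$) depends only on the activations and choices of vertices within distance~$2$ of $v$ (resp.\ $uv$), hence is mutually independent of all but $\mathrm{poly}(\Delta)$ of the other bad events; since each has probability $\exp(-\Delta^{\Omega(1)})$ by the concentration bounds above, the Lov\'asz Local Lemma produces an outcome in which all of~(1)--(5) hold, yielding the required reduct. I expect the two main obstacles to be conclusion~(3) and conclusion~(4). For~(3), the entropy bookkeeping is delicate: one must see the first-order losses cancel against renormalisation and then squeeze the remaining term using both hypotheses~3 and~4, and although this survives the passage to covers essentially unchanged, keeping the constants consistent with the geometric decay of $\deg_G(v)$ over the whole iteration is what fixes the number $120$ in Theorem~\ref{theo:upperbound1}. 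For~(4), the degree concentration genuinely needs a new idea, because for ordinary or list colourings a neighbour is removed when its colour reappears on a further neighbour --- an event whose structure~\cite{MR} exploits to apply Talagrand's inequality --- whereas here the conflicts are mediated by the matchings of $H$, and rerouting the concentration argument around this is precisely Lemma~\ref{lemma:degconc}.
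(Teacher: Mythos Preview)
Your overall architecture---one round of a random partial-colouring procedure, expectation computations for conclusions~(1)--(4), concentration, then the Lov\'asz Local Lemma---matches the paper's, but the random procedure itself differs. You activate \emph{vertices} and have each activated vertex draw a single colour; the paper (following~\cite{MR}) instead activates \emph{colours}: each $x\in V(H)\setminus B$ is independently placed in a set $S$ with probability $p(x)/\ln\Delta$, a vertex $v$ is deleted only if $L(v)\cap S\neq\emptyset$ and every colour in $L(v)\cap S$ has no $H$-neighbour in $S$, and $p'(x)$ is set to $p(x)/K(x)$ (capped at $\hat p$, with an equalising coin at the cap so that $\mathbf{E}(p'(x))=p(x)$ exactly) when $N_H(x)\cap S=\emptyset$, else to $0$. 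The payoff of colour-level activation is that the events $R(x)=\{N_H(x)\cap S\neq\emptyset\}$ for $x\in L(v)$ are mutually independent (distinct colours in $L(v)$ have disjoint $H$-neighbourhoods), and likewise across an edge $uv$ by triangle-freeness, so conclusions~(1), (2) and the entropy concentration all follow from the Simple Concentration Bound rather than Talagrand; your vertex-activation variant could be made to work too, but with slightly messier dependencies.

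One correction to your bookkeeping: the entropy drop in conclusion~(3) is \emph{not} controlled by hypothesis~3. In the paper's computation one gets $\mathbf{E}(Q'(v))\geq Q(v)-\sum_{x} p(x)\ln(1/K(x))\geq Q(v)-(1+o(1))\,\alpha\sum_{u\in N_G(v)}p(uv)$, so only hypothesis~2 (the bound $p(uv)\leq\sqrt2/k$) is needed there; no ``near-uniformity of the weights $\ln(1/p(x))$'' enters, and the same would be true in your variant. Hypothesis~3 is used only for conclusion~(4): via Lemma~\ref{lemma:entropy} it gives $p_m(v)\geq 7/10-o(1)$, and that lower bound on the moderate mass is what forces $\Pr(v\text{ is removed})\geq 2/(3\ln\Delta)$ in the degree-expectation calculation.
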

	
	We present the proof of Lemma~\ref{lemma:reduct} in Subsection~\ref{subsec:reduct}.
	
	Note that if $(G', L', H', p')$ is a reduct of $(G, L, H, p)$, and $(G'', L'', H'', p'')$ is a reduct of $(G', L', H', p')$, then $(G'', L'', H'', p'')$ is a reduct of $(G, L, H, p)$. Thus, Lemma~\ref{lemma:reduct} can be applied repeatedly to produce reducts with smaller and smaller degrees, until we finally obtain a nice tuple. The details of this strategy are worked out in Subsection~\ref{subsec:last}.
	
	\subsection{Proof of Lemma~\ref{lemma:final}}\label{subsec:final}
	
	We obtain Lemma~\ref{lemma:final} as an immediate corollary of the following result, proved in~\cite{Bernshteyn}.
	
	Let $U_1$, \ldots, $U_n$ be a collection of pairwise disjoint nonempty finite sets. A \emph{choice function} $F$ is a subset of $\bigcup_{i=1}^n U_i$ such that for all $1 \leq i \leq n$, $|F \cap U_i| = 1$. A \emph{partial choice function} $P$ is a subset of $\bigcup_{i=1}^n U_i$ such that for all $1 \leq i \leq n$, $|P \cap U_i| \leq 1$. For a partial choice function $P$, let
	$$
		\operatorname{dom}(P) \coloneqq \{i \,:\, P \cap U_i \neq \emptyset\}. 
	$$
	Thus, a choice function $F$ is a partial choice function with $\operatorname{dom}(F) = \{1, \ldots, n\}$.
	
	Let $F$ be a choice function and let $P$ be a partial choice function. We say that $P$ \emph{occurs} in $F$ if $P \subseteq F$, and we say that $F$ \emph{avoids} $P$ if $P$ does not occur in $F$.
	
	A \emph{multichoice function} $M$ is any subset of $\bigcup_{i=1}^n U_i$. Again, we say that a partial choice function $P$ \emph{occurs} in a multichoice function $M$ if $P \subseteq M$. Suppose that we are given a family $P_1$, \ldots, $P_m$ of nonempty ``forbidden'' partial choice functions. For a multichoice function $M$, the \emph{$i^\text{th}$ defect} of $M$ (notation: $\operatorname{def}_i(M)$) is the number of indices $j$ such that $i \in \operatorname{dom}(P_j)$ and $P_j$ occurs in $M$.
	
	\begin{theo}[\cite{Bernshteyn}, Theorem~20]\label{theo:choice}
		Let $U_1$, \ldots, $U_n$ be a collection of pairwise disjoint nonempty finite sets and let $P_1$, \ldots, $P_m$ be a family of nonempty partial choice functions. Let $M_i \subseteq U_i$ be a random subset of $U_i$ for each $1 \leq i \leq n$. Suppose that the variables $M_i$ are mutually independent and let $M \coloneqq \bigcup_{i=1}^n M_i$. If
		$$
			\mathbf{E}(|M_i|) \geq 1 + \mathbf{E} (\operatorname{def}_i(M))
		$$
		for all $1 \leq i \leq n$, then there exists a choice function $F$ that avoids all of $P_1$, \ldots, $P_m$.
	\end{theo}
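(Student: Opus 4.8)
The plan is to prove Theorem~\ref{theo:choice} probabilistically: sample the multichoice function $M=\bigcup_{i=1}^n M_i$ as prescribed, and then extract from inside $M$ an honest choice function avoiding all of $P_1,\dots,P_m$. The first ingredient is a \emph{localization} observation. If $F$ is any choice function with $F\subseteq M$, then $F$ automatically avoids every $P_j$ that does not occur in $M$: such a $P_j$ contains a color $x\in U_i\setminus M_i$ for some $i$, whereas $F\cap U_i\subseteq M_i$, so $P_j\not\subseteq F$. Hence it suffices to produce, for a favorable outcome of $M$, a choice function $F\subseteq M$ that avoids just those $P_j$ which occur in $M$ — and the hypothesis is designed to make these few.

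For the extraction I would process the indices $1,\dots,n$ in a fixed order and build $F$ greedily: at step $i$, pick $F\cap U_i$ to be a single element of $M_i$ other than the element of $P_j\cap U_i$ for each $P_j$ that occurs in $M$, satisfies $i=\max\operatorname{dom}(P_j)$, and already has $F\cap U_{i'}=P_j\cap U_{i'}$ for every $i'\in\operatorname{dom}(P_j)\setminus\{i\}$; the resulting $F$ then meets no occurring $P_j$. The number of colors to avoid at step $i$ is at most $\#\{j: P_j\text{ occurs in }M,\ i=\max\operatorname{dom}(P_j)\}\le\operatorname{def}_i(M)$, so the step succeeds (and in particular $M_i\neq\emptyset$) whenever $|M_i|\ge 1+\operatorname{def}_i(M)$. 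Thus it is enough to arrange that $|M_i|\ge 1+\operatorname{def}_i(M)$ simultaneously for all $i$ with positive probability; allowing the processing order to vary only relaxes this to a Hall-type requirement on the sets $M_i$ and the occurring $P_j$'s.

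By hypothesis $\mathbf{E}(|M_i|)\ge 1+\mathbf{E}(\operatorname{def}_i(M))$ for every $i$, so $|M_i|-\operatorname{def}_i(M)$ has expectation at least $1$ for each $i$; but an inequality between expectations does not force the events $\{|M_i|\ge 1+\operatorname{def}_i(M)\}$ to hold together, and indeed $|M_i|$ and $\operatorname{def}_i(M)$ are positively coupled. I would close this gap by an alteration/resampling argument of Moser--Tardos type: sample $M$, run the greedy, and whenever the step at $i$ fails, record a witness and resample $M_i$; the per-index budget $\mathbf{E}(|M_i|)\ge 1+\mathbf{E}(\operatorname{def}_i(M))$ is exactly what is needed to charge the failures and conclude that the process terminates. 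It is also convenient to first peel off degenerate configurations by induction on $\sum_i|U_i|$: a singleton $U_i=\{x\}$ forces $F\cap U_i=\{x\}$ and, by the hypothesis at $i$, has $\Pr[x\in M_i]=1$, so it reduces cleanly; a color of marginal probability $0$, or a color appearing in no $P_j$, can be deleted or committed to; and a ``heavy'' color $x\in U_i$ whose incident forbidden sets carry total conditional weight at least $1$ can have its marginal zeroed while preserving the hypothesis. What remains after these reductions is the generic case, which is where the resampling argument does its work.

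The step I expect to be the main obstacle is precisely this passage from the per-index expectation inequality to a statement holding pointwise and simultaneously across all $i$: it is what rules out a one-shot first-moment proof and forces the iterative (resampling or local-lemma-type) analysis. By contrast, the localization observation, the greedy extraction, and the bookkeeping for the degenerate reductions should all be routine.
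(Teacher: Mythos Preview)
The paper does not prove Theorem~\ref{theo:choice}; it is quoted verbatim as Theorem~20 of~\cite{Bernshteyn} and invoked as a black box in the proof of Lemma~\ref{lemma:final}. There is therefore no proof in this paper to compare your proposal against.

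As for the proposal on its own terms: the localization observation and the greedy extraction are correct and cleanly stated, and you are honest about where the real content lies --- passing from the per-index expectation inequality $\mathbf{E}(|M_i|)\ge 1+\mathbf{E}(\operatorname{def}_i(M))$ to something that holds simultaneously for all $i$. But the proposal does not actually cross that gap. You assert that a Moser--Tardos-type resampling ``is exactly what is needed to charge the failures and conclude that the process terminates,'' yet you give no dependency structure, no witness tree or entropy-compression bookkeeping, and no explanation of why the bare inequality $\mathbf{E}(|M_i|)\ge 1+\mathbf{E}(\operatorname{def}_i(M))$ supplies enough slack for such an argument; in standard Moser--Tardos one needs a multiplicative margin, not an additive one, and the events $\{|M_i|<1+\operatorname{def}_i(M)\}$ are not obviously of the ``bad event on a small variable set'' shape that the framework handles. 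The degenerate reductions you list are reasonable preprocessing but do not touch this issue. In short, you have correctly located the obstacle and gestured at the right family of techniques (the source~\cite{Bernshteyn} does prove the result via the Local Cut Lemma, an LLL-type device), but the step you flag as ``the main obstacle'' remains entirely unargued in your write-up.
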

	
	Now let us state Lemma~\ref{lemma:final} again.
	
	\begin{lemmacopy}{lemma:final}\label{lemma:final1}
		Let $(L, H)$ be a cover of $G$ and let $p\colon V(H) \to \mathbb{R}_{\geq 0}$ be an assignment of nonnegative real numbers to the vertices of $H$. Suppose that there exists a positive constant $\delta$ such that
		\begin{enumerate}
			\item for each $v \in V(G)$, $p(v) \geq \delta$;
			\item for each $v \in V(G)$ and $x \in L(v)$, $p(x) \leq \delta/2$; and
			\item for each $v \in V(G)$, $\sum_{u \in N_G(v)} p(uv) \leq \delta^2/4$.			
		\end{enumerate}
		Then $G$ is $(L, H)$-colorable. 
	\end{lemmacopy}
	\begin{proof}
		For each $xy \in E(H)$, let $P_{xy} \coloneqq \{x, y\}$. Note that such $P_{xy}$ is a partial choice function with respect to the collection $\{L(v)\}_{v \in V}$ of pairwise disjoint sets. Moreover, a choice function that avoids all of $\{P_{xy}\}_{xy \in E(H)}$ is an $(L,H)$-coloring of $G$.
		
		Construct a random set $M \subseteq V(H)$ by including a vertex $x \in V(H)$ in $M$ with probability $2p(x)/\delta$, making the choices independently for all vertices. We have
		$$
			\mathbf{E}(|M\cap L(v)|) = \frac{2p(v)}{\delta}
		$$
		and
		$$
			\mathbf{E}(\operatorname{def}_v(M)) = \frac{4}{\delta^2}\sum_{u \in N_G(v)} p(uv).
		$$
		Therefore, Theorem~\ref{theo:choice} implies that if for all $v \in V(G)$, we have
		$$
			\frac{2p(v)}{\delta} \geq 1 + \frac{4}{\delta^2}\sum_{u \in N_G(v)} p(uv),
		$$
		then there exists a choice function avoiding all of $\{P_{xy}\}_{xy \in E(G)}$, i.e., $G$ is $(L,H)$-colorable. But
		$$
			\frac{2p(v)}{\delta} \geq 2,
		$$
		while
		$$
			\frac{4}{\delta^2}\sum_{u \in N_G(v)} p(uv) \leq 1,
		$$
		so we are done.
	\end{proof}
	\begin{remk}
		A result similar to Lemma~\ref{lemma:final1} can also be obtained using the Lov\'{a}sz Local Lemma.
	\end{remk}
	\begin{remk}
		An argument analogous to the proof of Theorem~\ref{theo:choice} given in~\cite{Bernshteyn} can be used to show that the second condition in Lemma~\ref{lemma:final1} is, in fact, not necessary.
	\end{remk}	
	
	\subsection{Proof of Lemma~\ref{lemma:entropy}}\label{subsec:entropy}
	
	Let us recall the statement of the lemma.
	
	\begin{lemmacopy}{lemma:entropy}
		Suppose that $G$ is a triangle-free graph with maximum degree at most~$\Delta$, $(L, H)$ is a cover of $G$ such that $|L(v)| = \left\lceil 120\Delta/\ln \Delta \right\rceil \eqqcolon k$ for all $v \in V(G)$, and $p\colon V(H) \to [0; \hat{p}]$, where $\hat{p} = \Delta^{-11/12}$. Moreover, assume that
		\begin{enumerate}
			\item for each $v \in V(G)$, $|p(v) - 1| \leq \Delta^{-1/10}$;
			\item for each $v \in V(G)$, $Q(v) \geq \ln k - \ln\Delta/40$;
			and
			\item for each $x \in V(H)$, either $p(x) = 0$ or $p(x) \geq 1/k$.
		\end{enumerate}
		Then for each $v \in V(G)$,
		$$
		p_m(v) \geq \frac{7}{10} - o(1).
		$$
	\end{lemmacopy}
	\begin{proof}
		Let
		$$
			B \coloneqq \{x \in V(H)\,:\, p(x) = \hat{p}\}.
		$$
		Note that
		$$
			p_m(v) = p(v) - |L(v) \cap B| \cdot \hat{p},
		$$
		so we need to get an upper bound on $|L(v) \cap B|\cdot \hat{p}$.
		 
		We have
		$$
			\ln k = \sum_{x \in L(v)} p(x)\ln k + (1 - p(v))\ln k \geq \sum_{x \in L(v)}p(x) \ln k - \Delta^{-1/10}\ln k.
		$$
		Therefore,
		$$
			\frac{\ln\Delta}{40} \geq \ln k - Q(v) \geq \sum_{x \in L(v)} p(x) \ln(k\cdot p(x)) - o(\ln\Delta).
		$$
		Note that all the terms in the latter sum are nonnegative. Each $x \in L(v) \cap B$ contributes to the sum the quantity
		$$
			\hat{p} \ln(k \cdot \hat{p}) = \frac{1}{12} \hat{p} \ln\Delta - o(\ln \Delta).
		$$
		Thus,
		$$
			\frac{\ln\Delta}{40} \geq |L(v) \cap B| \cdot \hat{p} \cdot \frac{1}{12}\ln\Delta - o(\ln\Delta).
		$$
		Hence,
		$$
			|L(v) \cap B| \cdot \hat{p} \leq \frac{3}{10} + o(1).
		$$
		This yields the lower bound
		$$
			p_m(v) \geq p(v) - \frac{3}{10} - o(1) \geq \frac{7}{10} - o(1),
		$$
		as desired.
	\end{proof}

	\subsection{Proof of Lemma~\ref{lemma:reduct}}\label{subsec:reduct}
	
	\subsubsection{The algorithm}
	
	Fix a positive parameter $\alpha$. Given a tuple $(G, L, H, p)$, the following randomized procedure outputs its reduct $(G', L', H', p')$.
	
	Let
	$$
		B \coloneqq \left\{x \in V(H) \,:\, p(x) = \hat{p}\right\}.
	$$
	For $x \in V(H) \setminus B$, let
	$$
		K(x) \coloneqq \prod_{y \in N_H(x) \setminus B} \left(1-\alpha p(y)\right).
	$$
	Let
	$$
		q(x) \coloneqq \frac{p(x)/\hat{p} - K(x)}{1 - K(x)}.
	$$
	
	Generate a random subset $S \subseteq V(H) \setminus B$ by including a vertex $x \in V(H) \setminus B$ in $S$ with probability $\alpha p(x)$, making the choices independently for all vertices.
	
	Define the function $p'\colon V(H) \to [0; \hat{p}]$ as follows.
	\begin{itemize}
		\item For each $x \in B$, let $p'(x) \coloneqq \hat{p}$.
		\item For each $x \not\in B$ such that $N_H(x) \cap S = \emptyset$, let $p'(x) \coloneqq \min\{p(x)/K(x), \hat{p}\}$.
		\item For each $x \not \in B$ such that $N_H(x) \cap S \neq \emptyset$ and $p(x)/K(x) \leq \hat{p}$, let $p'(x) \coloneqq 0$.
		\item For each $x \not \in B$ such that $N_H(x) \cap S \neq \emptyset$ and $p(x)/K(x) > \hat{p}$, let $p'(x) \coloneqq \hat{p}$ with probability $q(x)$ and $p'(x) \coloneqq 0$ otherwise.
	\end{itemize}
	
	Let
	$$
		W \coloneqq \left\{x \in V(H) \,:\, x \in S \text{ and } N_H(x) \cap S = \emptyset\right\};
	$$
	$$
		A \coloneqq \left\{v \in V(G) \,:\, L(v) \cap S \subseteq W \text{ and } L(v) \cap S \neq \emptyset\right\}.
	$$
	Finally, let
	$$
	G' \coloneqq G - A
	$$
	and define $L'$ and $H'$ accordingly.
	
	\begin{lemma}\label{lemma:correct}
		The above procedure always outputs a tuple $(G', L', H', p')$ that is a reduct of $(G, L, H, p)$. Moreover, for each $x \in V(H')$, either $p'(x) = 0$ or $p'(x) \geq p(x)$.
	\end{lemma}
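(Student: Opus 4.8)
The plan is to prove both assertions by a straightforward unwinding of the definitions, the only piece of actual content being the recipe for extending a $p'$-moderate coloring of $G'$ to $G$. I would begin by recording the facts about the procedure's output that hold for \emph{every} realization of the random set $S$ (since ``always outputs a reduct'' quantifies over all outcomes). From $p\colon V(H)\to[0;\hat p]$ and the definition of $B$, every $x\notin B$ has $p(x)<\hat p$; and since $x$ is put into $S$ with probability $\alpha p(x)$, any realization satisfies $S\subseteq V(H)\setminus B$ with $p(x)>0$ for all $x\in S$. Assuming the ambient parameters make the procedure well defined (i.e.\ $\alpha p(y)<1$ for all $y$, so that $K(x)\in(0,1]$ and $q(x)\in(0,1]$), I would then simply inspect the four-way definition of $p'$. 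This yields at once that $p'$ maps into $[0;\hat p]$ and that for every $x$ either $p'(x)=0$ or $p'(x)\geq p(x)$: for $x\in B$ because $p'(x)=\hat p=p(x)$; in the ``$N_H(x)\cap S=\emptyset$'' case because $p(x)/K(x)\geq p(x)$ and $\hat p>p(x)$, so the minimum is $\geq p(x)$ (and equals $0$ exactly when $p(x)=0$); and in the two ``$N_H(x)\cap S\neq\emptyset$'' cases because the only nonzero value $p'$ can take there is $\hat p>p(x)$. That already gives the ``moreover'' clause. The same inspection shows that $p'(x)\in(0;\hat p)$ forces $x\notin B$, $p(x)>0$, and, crucially, $N_H(x)\cap S=\emptyset$; in particular every $p'$-moderate color is $p$-moderate and has no $H$-neighbor in $S$.

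Next I would build the extension. Let $I'$ be a $p'$-moderate $(L',H')$-coloring of $G'$. Since $H'$ is the subgraph of $H$ induced on $\bigcup_{v\in V(G')}L(v)$, the set $I'$ is independent in $H$, and by the previous paragraph it consists of $p$-moderate colors, none of which is $H$-adjacent to any vertex of $S$. For each $v\in A$ choose an arbitrary $x_v\in L(v)\cap S$ (nonempty by the definition of $A$), and set $I\coloneqq I'\cup\{x_v\,:\,v\in A\}$. Because $A\cap V(G')=\emptyset$, each $x_v$ lies outside $V(H')$ and hence outside $I'$, and $x_v\in S$ gives $p(x_v)\in(0;\hat p)$, so $I$ is a set of $p$-moderate colors with $I\cap L(v)\neq\emptyset$ for every $v\in V(G)$. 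It remains to verify that $I$ is independent in $H$. Here I would use that $v\in A$ forces $L(v)\cap S\subseteq W$, so each chosen $x_v$ lies in $W$, i.e.\ $N_H(x_v)\cap S=\emptyset$: this rules out an edge between $x_v$ and $x_{v'}$ for $v\neq v'$ in $A$ (both lie in $S$), and an edge between $x_v$ and some $y\in I'$ would put $x_v\in N_H(y)\cap S$, contradicting that colors of $I'$ avoid $S$. Combined with independence of $I'$, this shows $I$ is an independent set, hence a $p$-moderate $(L,H)$-coloring of $G$ extending $I'$ — which is exactly what the definition of reduct requires.

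I do not expect a genuine obstacle; the argument is a definition chase. The one point that needs care — and the one I would highlight — is the observation driving the independence check: a color that survives into the reduced cover with a strictly-moderate $p'$-value must be isolated from $S$ in $H$, and this is precisely why $A$ is defined through the (stronger than one might naively take) requirement that \emph{every} activated color at $v$ survive, so that an arbitrary surviving color per vertex of $A$ can be used without creating conflicts. I would also make sure to invoke, or to have already fixed, the smallness assumption $\alpha p(y)<1$ needed for $K(x)$ and $q(x)$ to be well defined.
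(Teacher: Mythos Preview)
Your proposal is correct and follows essentially the same approach as the paper: the paper likewise extends a $p'$-moderate coloring $I'$ by adjoining, for each $v\in A$, colors from $L(v)\cap S\subseteq W$, and verifies independence via the key observation that any $p'$-moderate color $y$ has $N_H(y)\cap S=\emptyset$ (phrased there as ``$x\in N_H(y)\cap S$ would give $p'(y)\in\{0,\hat p\}$''). The only cosmetic differences are that the paper adds all of $L(v)\cap S$ rather than one chosen $x_v$, and does not comment on the well-definedness of $K(x)$ and $q(x)$.
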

	\begin{proof}
		The second part of the statement follows immediately from the way $p'$ is defined. To prove the first part, we claim that if $I' \subseteq V(H')$ is a $p'$-moderate $(L', H')$-coloring of $G'$, then $$I \coloneqq I' \cup \left(\bigcup_{v \in A} L(v) \cap S\right)$$ is a $p$-moderate $(L,H)$-coloring of $G$. Note that for each $v \in V(G)$, $I \cap L(v) \neq \emptyset$. Indeed, if $v \in V(G')$, then $I \cap L(v) = I'\cap L(v) \neq \emptyset$. If, on the other hand, $v \in A$, then $I \cap L(v) = L(v) \cap S \neq \emptyset$. Moreover, $I$ is independent. Indeed, $W$ is an independent set since $W \subseteq S$ but $N_H(x) \cap S = \emptyset$ for all $x \in W$. Therefore, $\bigcup_{v \in A} L(v) \cap S \subseteq W$ is independent as well. Since $I'$ is independent by definition, if there is an edge between two vertices in $I$, then it connects a vertex $x \in \bigcup_{v \in A} L(v) \cap S$ and a vertex $y \in I'$. Since $x \in N_H(y) \cap S$, $p'(y) \in \{0, \hat{p}\}$, which is a contradiction because $I'$ is $p'$-moderate.
		
		Therefore, $I$ is an $(L,H)$-coloring. It remains to verify that $I$ is $p$-moderate. Note that, for all $x \in V(H')$, if $p(x) \in \{0, \hat{p}\}$, then $p'(x) \in \{0, \hat{p}\}$ as well. Thus, since $I'$ is $p'$-moderate, it is enough to establish that $p(x) \in (0; \hat{p})$ for all $x \in \bigcup_{v \in A} L(v) \cap S$. Indeed, if $x \in S$, then $p(x) > 0$, for otherwise $x$ would not have been chosen, and $x \not \in B$, so $p(x) < \hat{p}$. 
	\end{proof}
	
	\subsubsection{Analysis of the algorithm}
	
	\paragraph{Probabilistic tools and general remarks}
	
	For the rest of the proof we assume that $G$ is a triangle-free graph with maximum degree at most $\Delta$ and $|L(v)| = \left\lceil 120\Delta/\ln\Delta\right\rceil \eqqcolon k$ for all $v \in V(G)$. We also set $\hat{p} \coloneqq \Delta^{-11/12}$ and $\alpha \coloneqq 1/\ln\Delta$.
	
	For each $x \in V(H)$, let $R(x)$ be the event that $N_H(x) \cap S \neq \emptyset$. Note that the value of $p'(x)$ is determined by the outcome of the event $R(x)$. Also observe that
	\begin{itemize}
		\item $N_H(x_1) \cap N_H(x_2) = \emptyset$ for any two distinct $x_1$, $x_2 \in L(v)$, where $v \in V(G)$;
		\item $N_H(x) \cap N_H(y) = \emptyset$ for any $x \in L(u)$, $y \in L(v)$, where $uv \in E(G)$ (this follows from the fact that $G$ is triangle-free).
	\end{itemize}
	Therefore, the following sets of random events are mutually independent:
	\begin{itemize}
		\item $\{R(x)\,:\, x \in L(v)\}$ for each $v \in V(G)$;
		\item $\{R(x)\,:\, x \in L(u)\} \cup \{R(y)\,:\, y \in L(v)\}$ for each $uv \in E(G)$.
	\end{itemize}
	
	Note that we have defined the value $p'(x)$ for all $x \in V(H)$ (and not only for $x \in V(H')$). Therefore, we can extend the definitions of $p'(v)$, $p'(uv)$, and $Q'(v)$ to all $v \in V(G)$ and $uv \in E(G)$.
	
	We will use the following standard results from probability theory.
	
	\begin{theo}[The Chernoff Bound, \cite{MR}, page 43]
		Suppose that $X_1$, \ldots, $X_n$ are independent random variables, each equal to $1$ with probability $p$ and $0$ otherwise. Let $X \coloneqq \sum_{i=1}^n X_i$. Then for any $0 \leq t \leq np$,
		$$
			\Pr(|X - np| > t) < 2e^{-t^2/(3np)}.
		$$
	\end{theo}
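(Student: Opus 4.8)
The plan is to prove this via the standard exponential-moment method (Markov's inequality applied to $e^{\lambda X}$) and then to absorb the resulting estimate into the clean expression $2e^{-t^2/(3np)}$ using two elementary one-variable inequalities. Write $\delta := t/(np)$, so that $\delta \in [0,1]$ by the hypothesis on $t$. The degenerate cases $p = 0$ (which forces $t = 0$) and $t = 0$ make the claim trivial, since then $\Pr(|X - np| > 0) \le 1 < 2$; so from now on assume $t > 0$ and $p > 0$.

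For the upper tail, fix $\lambda > 0$. Markov's inequality applied to the nonnegative variable $e^{\lambda X}$ gives $\Pr(X \ge np + t) \le e^{-\lambda(np + t)}\mathbf{E}(e^{\lambda X})$. By independence, $\mathbf{E}(e^{\lambda X}) = \big(1 + p(e^{\lambda} - 1)\big)^n < e^{np(e^{\lambda} - 1)}$, using the strict inequality $1 + s < e^s$ for $s > 0$. Substituting the minimizer $\lambda = \ln(1 + \delta)$ of the exponent yields
$$
\Pr(X - np > t) \;<\; \exp\!\big(-np\,\varphi_+(\delta)\big), \qquad \text{where } \varphi_+(\delta) := (1+\delta)\ln(1+\delta) - \delta .
$$
Running the same computation with $\lambda < 0$ — with optimal choice $\lambda = \ln(1-\delta)$ when $\delta < 1$, and observing that for $\delta = 1$ one has $\Pr(X - np < -t) = \Pr(X < 0) = 0$ — gives
$$
\Pr(X - np < -t) \;\le\; \exp\!\big(-np\,\varphi_-(\delta)\big), \qquad \text{where } \varphi_-(\delta) := (1-\delta)\ln(1-\delta) + \delta .
$$
Granting the scalar inequalities $\varphi_+(\delta) \ge \delta^2/3$ and $\varphi_-(\delta) \ge \delta^2/2 \ge \delta^2/3$ for $\delta \in [0,1]$, a union bound closes the argument: $\Pr(|X - np| > t) \le \Pr(X - np > t) + \Pr(X - np < -t) < 2e^{-np\delta^2/3} = 2e^{-t^2/(3np)}$, and the strict inequality is inherited from the strict estimate $1 + s < e^s$ used above.

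So the only genuine work is the two scalar inequalities, and I expect $\varphi_+(\delta) \ge \delta^2/3$ on \emph{all} of $[0,1]$ to be the one delicate point. Setting $g(\delta) := \varphi_+(\delta) - \delta^2/3$, one has $g(0) = g'(0) = 0$ and $g''(\delta) = \tfrac{1}{1+\delta} - \tfrac{2}{3}$, which is nonnegative only on $[0,\tfrac12]$, so pure convexity does not suffice. The fix is to work with $g'(\delta) = \ln(1+\delta) - \tfrac{2}{3}\delta$ directly: it increases from $0$ on $[0,\tfrac12]$ and decreases on $[\tfrac12,1]$, but stays positive throughout because $g'(1) = \ln 2 - \tfrac{2}{3} > 0$; hence $g$ is nondecreasing on $[0,1]$ with $g(0) = 0$, giving $g \ge 0$. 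The companion bound $\varphi_-(\delta) \ge \delta^2/2$ is easier: with $h(\delta) := \varphi_-(\delta) - \delta^2/2$ one gets $h(0) = h'(0) = 0$ and $h''(\delta) = \tfrac{\delta}{1-\delta} \ge 0$ on $[0,1)$, so $h$ is convex and therefore $\ge 0$. (This is a classical result, stated here in the form quoted from \cite{MR}; the sketch above is the usual route.)
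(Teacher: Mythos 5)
The paper does not prove this statement: it is quoted verbatim from Molloy and Reed \cite{MR} as a black-box probabilistic tool, so there is no in-paper proof to compare against. Your self-contained derivation via the exponential-moment (Cram\'er--Chernoff) method is correct. The Markov step and the computation of $\mathbf{E}(e^{\lambda X}) = \bigl(1 + p(e^{\lambda}-1)\bigr)^n$ are standard, and the only genuinely delicate point is, as you identify, the scalar inequality $(1+\delta)\ln(1+\delta) - \delta \geq \delta^2/3$ on \emph{all} of $[0,1]$: pure convexity only yields it on $[0,\tfrac12]$, since $g''(\delta) = \tfrac{1}{1+\delta} - \tfrac{2}{3}$ changes sign at $\delta = \tfrac12$, and your fix via the unimodality of $g'$ together with the endpoint check $g'(1) = \ln 2 - \tfrac{2}{3} > 0$ is exactly right (and tight enough that it justifies why the constant $3$, rather than something smaller, appears in this form of the bound). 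The lower-tail companion $\varphi_-(\delta) \geq \delta^2/2$ is indeed the easy convex direction, the degenerate cases $t = 0$, $p = 0$, and $\delta = 1$ are handled soundly, and the strict inequality in the conclusion is correctly traced to the strict estimate $1 + s < e^s$ for $s \neq 0$ applied inside the mgf bound for the upper tail.
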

	
	\begin{theo}[Simple Concentration Bound, \cite{MR}, page 79]
		Let $X$ be a random variable determined by $n$ independent trials $T_1$, \ldots, $T_n$ and such that changing the outcome of any one trial can affect $X$ by at most~$c$. Then
		$$
		\Pr(|X - \mathbf{E}(X)| > t) \leq 2 e^{-t^2/(2c^2 n)}.
		$$
	\end{theo}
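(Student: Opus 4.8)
The plan is to prove this by the standard martingale (Azuma--Hoeffding) argument, since the hypothesis ``changing the outcome of any one trial can affect $X$ by at most $c$'' is precisely a bounded-differences condition. Write $X = g(T_1, \ldots, T_n)$ for a measurable function $g$ and define the Doob martingale $X_i \coloneqq \mathbf{E}(X \mid T_1, \ldots, T_i)$ for $0 \leq i \leq n$, so that $X_0 = \mathbf{E}(X)$ and $X_n = X$. The heart of the argument is to show that each increment $D_i \coloneqq X_i - X_{i-1}$ satisfies $\mathbf{E}(D_i \mid T_1, \ldots, T_{i-1}) = 0$ and, conditionally on $T_1, \ldots, T_{i-1}$, lies in an interval of length at most $c$. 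The mean-zero property is immediate from the tower rule. For the range bound I would use the independence of the trials: conditionally on $T_1, \ldots, T_{i-1}$, the random variable $X_i$ is a function of $T_i$ alone (an average of $g$ over the still-free trials $T_{i+1}, \ldots, T_n$), and replacing $T_i$ by any other outcome changes $g$ pointwise by at most $c$, hence changes this average by at most $c$; so the conditional oscillation of $X_i$, and therefore of $D_i$, is at most $c$. Combined with $\mathbf{E}(D_i \mid T_1, \ldots, T_{i-1}) = 0$ this gives $|D_i| \leq c$.

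Next I would invoke (the conditional form of) the elementary bound $\mathbf{E}(e^{\lambda Y}) \leq e^{\lambda^2 c^2/2}$, valid for any mean-zero $Y$ with $|Y| \leq c$ — this is the crude version of Hoeffding's lemma, and it is exactly what produces the stated constant; the sharper $e^{\lambda^2 c^2/8}$ would give an even stronger bound. Applying this to $D_i$ conditionally on $T_1, \ldots, T_{i-1}$ and peeling off the increments one at a time yields $\mathbf{E}\bigl(e^{\lambda (X_n - X_0)}\bigr) \leq e^{n\lambda^2 c^2/2}$ for every $\lambda > 0$. A Chernoff-type step then gives $\Pr(X - \mathbf{E}(X) > t) \leq e^{-\lambda t + n\lambda^2 c^2/2}$; optimizing over $\lambda$ (the minimum is at $\lambda = t/(nc^2)$) produces the bound $e^{-t^2/(2c^2 n)}$. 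Running the identical computation with $-X$ in place of $X$ bounds the lower tail, and a union bound over the two tails contributes the factor of $2$, completing the proof.

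The main (indeed essentially the only) subtle point is the verification that the martingale increments have conditional range at most $c$; this is where independence of the trials is genuinely used — for dependent trials the conclusion can fail — and it is the step to state with care rather than wave through. Everything afterwards is the textbook exponential-moment / Chernoff machinery and amounts to a one-line optimization of the exponent. (Since this result is quoted verbatim from \cite{MR}, in the paper itself one simply cites it; the sketch above is how one would reconstruct the proof if needed.)
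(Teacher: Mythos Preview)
Your sketch is correct and is the standard Azuma--Hoeffding/McDiarmid argument; the paper does not prove this statement at all but simply quotes it from \cite{MR}, as you yourself note in your final parenthetical. There is nothing to compare against.
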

	
	\begin{theo}[Talagrand's Inequality, \cite{MR}, page 81]
		Let $X$ be a nonnegative random variable, not identically~$0$, which is determined by $n$ independent trials $T_1$, \ldots, $T_n$, and satisfying the following for some $c$, $r > 0$:
		\begin{enumerate}
			\item changing the outcome of any one trial can affect $X$ by at most $c$; and
			\item for any $s$, if $X \geq s$, then there is a set of at most $rs$ trials whose outcomes certify that $X \geq s$.
		\end{enumerate}
		Then for any $0 \leq t \leq \mathbf{E}(X)$,
		$$
			\Pr(|X - \mathbf{E}(X)| > t + 60c\sqrt{r\mathbf{E}(X)}) \leq 4e^{-t^2/(8c^2 r \mathbf{E}(X))}.
		$$
	\end{theo}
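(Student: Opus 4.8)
The plan is to derive this ``working'' version of Talagrand's inequality from the geometric form of Talagrand's concentration inequality on product spaces, which I will quote: writing $\Omega = \prod_{i=1}^n \Omega_i$ for the product probability space carrying the independent trials $T_1, \ldots, T_n$, and defining for a measurable $A \subseteq \Omega$ and $x \in \Omega$ the \emph{convex distance}
$$
d_T(x, A) \coloneqq \sup_{\|\alpha\|_2 \leq 1} \ \inf_{y \in A} \ \sum_{i \,:\, x_i \neq y_i} \alpha_i,
$$
one has $\Pr(A)\cdot\Pr(d_T(\cdot, A) \geq u) \leq e^{-u^2/4}$ for every $u \geq 0$. (This geometric statement is itself proved by the standard induction on $n$; I would treat it as a black box.)

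The core step is to translate hypotheses~(1) and~(2) on $X$ into a lower bound on $d_T$. Fix reals $b < s$ and set $A \coloneqq \{y \,:\, X(y) \leq b\}$. Suppose $X(x) \geq s$. By hypothesis~(2) there is a set $J$ of trials with $|J| \leq rs$ whose outcomes certify $X \geq s$; that is, $X(z) \geq s$ for every $z$ agreeing with $x$ on $J$. Fix any $y \in A$, let $k$ be the number of coordinates in $J$ on which $x$ and $y$ disagree, and let $z$ agree with $x$ on $J$ and with $y$ off $J$. Then $X(z) \geq s$, while $z$ and $y$ differ in exactly $k$ coordinates, so hypothesis~(1) gives $X(y) \geq X(z) - kc \geq s - kc$; since $X(y) \leq b$, this forces $k \geq (s-b)/c$. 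Taking $\alpha$ to be the unit vector supported uniformly on $J$ yields $\sum_{i : x_i \neq y_i}\alpha_i \geq k/\sqrt{|J|} \geq (s-b)/(c\sqrt{rs})$ for every $y \in A$, hence $d_T(x, A) \geq (s-b)/(c\sqrt{rs})$. Plugging the inclusion $\{X \geq s\} \subseteq \{d_T(\cdot, A) \geq (s-b)/(c\sqrt{rs})\}$ into the convex-distance inequality gives the master estimate
$$
\Pr(X \leq b)\cdot \Pr(X \geq s) \leq \exp\!\left(-\frac{(s-b)^2}{4c^2 r s}\right).
$$

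It remains to convert this into concentration around $\mathbf{E}(X)$. First I would take $M$ to be a median of $X$ (well-defined since $X$ is nonnegative and not identically $0$) and apply the master estimate twice: with $b = M$, $s = M + \beta$ (using $\Pr(X \leq M) \geq 1/2$) to bound the upper tail $\Pr(X \geq M + \beta) \leq 2e^{-\beta^2/(4c^2 r(M+\beta))}$, and with $b = M - \beta$, $s = M$ to bound the lower tail $\Pr(X \leq M - \beta) \leq 2e^{-\beta^2/(4c^2 r M)}$. Integrating these bounds over $\beta \in [0,\infty)$ controls $|\mathbf{E}(X) - M|$ by an absolute constant times $c\sqrt{r\mathbf{E}(X)}$; the constant $60$ in the statement is chosen to absorb this gap together with the loss incurred when $M \pm \beta$ is replaced by $\mathbf{E}(X)$ in the exponents. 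Recentering the two median tail bounds at $\mathbf{E}(X)$ and reading off the exponent $t^2/(8c^2 r\mathbf{E}(X))$ (the $8$ rather than $4$ again covering the recentering, valid for $0 \leq t \leq \mathbf{E}(X)$) completes the proof.

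The main obstacle is the bookkeeping in the last paragraph: because $s$ itself — not merely $s - b$ — sits in the denominator of the master estimate, the tail bounds degrade for large deviations, so one must check that the integral defining $|\mathbf{E}(X)-M|$ still converges and that, after recentering at $\mathbf{E}(X)$ and restricting to $0 \leq t \leq \mathbf{E}(X)$, the advertised constants ($60$, and the $8$ in the exponent) genuinely go through. The translation step in the second paragraph, by contrast, is clean once one picks the right test vector $\alpha$, namely the uniform unit vector supported on the certifying set $J$.
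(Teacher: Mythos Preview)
The paper does not prove this statement: it is quoted verbatim from Molloy--Reed \cite{MR} as a standard probabilistic tool and used as a black box, with no argument given. So there is no ``paper's own proof'' to compare against.

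That said, your sketch is the standard derivation (essentially the one Molloy and Reed themselves give): reduce to Talagrand's convex-distance inequality on product spaces, use the certifying set $J$ to build the test vector $\alpha$ supported uniformly on $J$, obtain the master two-level estimate $\Pr(X\le b)\Pr(X\ge s)\le\exp(-(s-b)^2/(4c^2rs))$, and then pass from median to mean by integrating the tail bounds. Your identification of the main nuisance---the $s$ in the denominator forcing some care when bounding $|\mathbf{E}(X)-M|$ and when recentering---is accurate, and the constants $60$ and $8$ are indeed chosen precisely to absorb that slack in the regime $0\le t\le\mathbf{E}(X)$. For the purposes of this paper none of that needs to be reproduced; citing \cite{MR} suffices.
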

	
	\begin{theo}[Symmetric Lov\'{a}sz Local Lemma, \cite{AS}, Corollary 5.1.2]
		Suppose that $\mathcal{A}$ is a finite set of random events such that
		\begin{enumerate}
			\item for each $A \in \mathcal{A}$, $\Pr(A) \leq p$; and
			\item for each $A \in \mathcal{A}$, there is a set $\Gamma(A) \subseteq \mathcal{A}\setminus \{A\}$ of size at most $d$ such that $A$ is mutually independent from the collection of events $\mathcal{A} \setminus (\Gamma(A) \cup \{A\})$.
		\end{enumerate}
		If $ep(d+1) \leq 1$, then with positive probability none of the events in $\mathcal{A}$ happen.
	\end{theo}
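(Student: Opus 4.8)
The plan is to deduce this symmetric form from the general (asymmetric) Lov\'{a}sz Local Lemma, which I would prove by induction. First I would set $x_A \coloneqq 1/(d+1)$ for every $A \in \mathcal{A}$. Since $|\Gamma(A)| \leq d$ and $(1-1/(d+1))^d > 1/e$ (equivalently $(1+1/d)^d < e$), the hypothesis $ep(d+1) \leq 1$ yields
$$
\Pr(A) \leq p \leq \frac{1}{e(d+1)} \leq \frac{1}{d+1}\left(1 - \frac{1}{d+1}\right)^{d} \leq x_A \prod_{B \in \Gamma(A)} (1 - x_B)
$$
for every $A \in \mathcal{A}$. So it suffices to prove the general statement: if reals $x_A \in [0,1)$ satisfy $\Pr(A) \leq x_A \prod_{B \in \Gamma(A)}(1 - x_B)$ for all $A$, then $\Pr\big(\bigcap_{A \in \mathcal{A}} \overline{A}\big) \geq \prod_{A \in \mathcal{A}} (1 - x_A) > 0$, the strict inequality being clear since $\mathcal{A}$ is finite and each $x_A < 1$. (The degenerate case $d = 0$ is handled directly, the events then being mutually independent.)

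The heart of the argument is the claim that for every $A \in \mathcal{A}$ and every $S \subseteq \mathcal{A} \setminus \{A\}$,
$$
\Pr\Big(A \,\Big|\, \bigcap_{B \in S} \overline{B}\Big) \leq x_A,
$$
which I would prove by induction on $|S|$, checking along the way that the conditioning events have positive probability (which the inductive bound itself provides, since each $x_B < 1$). The base case $S = \emptyset$ is just $\Pr(A) \leq x_A$. For the inductive step, split $S = S_1 \sqcup S_2$ with $S_1 \coloneqq S \cap \Gamma(A)$ and $S_2 \coloneqq S \setminus \Gamma(A)$, and expand the conditional probability as the ratio
$$
\Pr\Big(A \,\Big|\, \bigcap_{B \in S}\overline{B}\Big) = \frac{\Pr\big(A \cap \bigcap_{B \in S_1}\overline{B} \,\big|\, \bigcap_{C \in S_2}\overline{C}\big)}{\Pr\big(\bigcap_{B \in S_1}\overline{B} \,\big|\, \bigcap_{C \in S_2}\overline{C}\big)}.
$$
For the numerator, discard the $\bigcap_{B \in S_1}\overline{B}$ factor and use that $A$ is mutually independent of $\mathcal{A} \setminus (\Gamma(A) \cup \{A\}) \supseteq S_2$ to get the upper bound $\Pr(A) \leq x_A \prod_{B \in \Gamma(A)}(1 - x_B)$. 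For the denominator, list $S_1 = \{B_1, \dots, B_r\}$ and write the probability as the telescoping product $\prod_{i=1}^{r}\big(1 - \Pr(B_i \mid \overline{B_1}\cap\cdots\cap\overline{B_{i-1}}\cap\bigcap_{C \in S_2}\overline{C})\big)$; each conditioning set here has fewer than $|S|$ members, so the induction hypothesis bounds the $i$-th factor below by $1 - x_{B_i}$, giving a denominator that is at least $\prod_{B \in S_1}(1 - x_B) \geq \prod_{B \in \Gamma(A)}(1 - x_B)$. Dividing, the product over $\Gamma(A)$ cancels and what remains is $\leq x_A$, closing the induction.

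To finish, fix an enumeration $\mathcal{A} = \{A_1, \dots, A_m\}$ and apply the chain rule:
$$
\Pr\Big(\bigcap_{i=1}^{m} \overline{A_i}\Big) = \prod_{i=1}^{m}\Big(1 - \Pr\Big(A_i \,\Big|\, \bigcap_{j < i}\overline{A_j}\Big)\Big) \geq \prod_{i=1}^{m}(1 - x_{A_i}) > 0,
$$
invoking the claim with $S = \{A_j : j < i\}$ for each factor. I expect the only genuinely delicate point to be the inductive step for the denominator: one must check that the conditioning sets appearing in the telescoping expansion are strictly smaller than $S$ (they are, since each drops $A$ itself together with at least one $B_j$) and that mutual independence is used only against events lying outside $\Gamma(A) \cup \{A\}$. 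The reduction to the asymmetric form and the final chain-rule computation are then routine.
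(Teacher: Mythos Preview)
Your proposal is correct and is the standard derivation of the symmetric Local Lemma from the asymmetric one (as in Alon--Spencer). Note, however, that the paper does not prove this theorem at all: it is quoted as a black box from \cite{AS}, so there is no ``paper's own proof'' to compare against. One small quibble: in justifying that the conditioning sets in the telescoping product are strictly smaller than $S$, you say they ``drop $A$ itself together with at least one $B_j$,'' but $A$ was never in $S$ to begin with; the correct reason is simply that $\{B_1,\dots,B_{i-1}\}\cup S_2$ omits $B_i,\dots,B_r$ and hence has size $(i-1)+|S_2|<r+|S_2|=|S|$.
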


	\paragraph{Estimating $p'(v)$ and $p'(uv)$}
	
	In this paragraph we show that for each $v \in V(G)$, the value of the random variable $p'(v)$ is highly concentrated around $p(v)$; and for each $uv \in E(G)$, the value of the random variable $p'(uv)$ is highly concentrated around $p(uv)$.
	
	\begin{lemma}\label{lemma:vertexexp}
		For each $v \in V(G)$,
		$$
		\mathbf{E}(p'(v)) = p(v).
		$$
	\end{lemma}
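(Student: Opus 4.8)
The plan is to compute $\mathbf{E}(p'(x))$ for each individual color $x \in V(H)$ separately and then sum over $x \in L(v)$ by linearity of expectation, using $p'(v) = \sum_{x \in L(v)} p'(x)$.

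First I would record the probabilistic fact that makes everything work: since $S$ is obtained by including each $x \in V(H)\setminus B$ independently with probability $\alpha p(x)$, and $S \subseteq V(H)\setminus B$, the event $\overline{R(x)}$ that $N_H(x)\cap S = \emptyset$ has probability exactly $\prod_{y \in N_H(x)\setminus B}(1-\alpha p(y)) = K(x)$. I would also note here that $K(x) > 0$, because $\alpha p(y) \leq \alpha\hat{p} < 1$ for $\Delta$ large; together with $p(x) \leq \hat{p}$ this guarantees that $p(x)/K(x)$ and $q(x)$ are well defined (and $q(x) \in [0,1]$) wherever they are used below.

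Then I would split into the three cases that define $p'(x)$. If $x \in B$, then $p'(x) = \hat{p} = p(x)$ deterministically, so $\mathbf{E}(p'(x)) = p(x)$. If $x \notin B$ and $p(x)/K(x) \leq \hat{p}$, then $p'(x) = p(x)/K(x)$ on the event $\overline{R(x)}$ and $p'(x) = 0$ on $R(x)$, whence $\mathbf{E}(p'(x)) = K(x)\cdot(p(x)/K(x)) = p(x)$. Finally, if $x \notin B$ and $p(x)/K(x) > \hat{p}$, then $p'(x) = \hat{p}$ on $\overline{R(x)}$ (probability $K(x)$) and, conditioned on $R(x)$, $p'(x) = \hat{p}$ with probability $q(x)$; hence $\mathbf{E}(p'(x)) = \hat{p}\,[K(x) + (1-K(x))q(x)]$. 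Substituting the definition of $q(x)$ gives $(1-K(x))q(x) = p(x)/\hat{p} - K(x)$, so the bracket equals $p(x)/\hat{p}$ and again $\mathbf{E}(p'(x)) = p(x)$.

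Since $\mathbf{E}(p'(x)) = p(x)$ in every case, linearity of expectation yields $\mathbf{E}(p'(v)) = \sum_{x \in L(v)} \mathbf{E}(p'(x)) = \sum_{x \in L(v)} p(x) = p(v)$. There is no genuine obstacle here: the statement is essentially a verification that the procedure was engineered to be unbiased, and the only point deserving a careful sentence is that $q(x)$ is defined precisely so that $K(x) + (1-K(x))q(x) = p(x)/\hat{p}$, which is what forces the third case to balance.
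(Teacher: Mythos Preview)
Your proof is correct and follows essentially the same route as the paper: compute $\mathbf{E}(p'(x))$ colorwise by splitting into the three cases of the procedure, using $\Pr(N_H(x)\cap S=\emptyset)=K(x)$ and the definition of $q(x)$, then sum over $L(v)$ by linearity. The only difference is cosmetic---you spell out the well-definedness of $p(x)/K(x)$ and $q(x)\in[0,1]$, which the paper leaves implicit.
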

	\begin{proof}
		It is enough to show that for all $x \in V(H)$,
		$$
			\mathbf{E}(p'(x)) = p(x).
		$$
		If $x \in B$, then $p'(x) = p(x) = \hat{p}$, so suppose that $x \not \in B$. There are two cases.
		
		{\sc Case 1:} $p(x)/K(x) \leq \hat{p}$. In this case,
		$$
			p'(x) = \begin{cases}
				p(x)/K(x) &\text{if $N_H(x) \cap S = \emptyset$};\\
				0 &\text{otherwise}.
			\end{cases}
		$$
		Note that
		$$
			\Pr(N_H(x) \cap S = \emptyset) = \prod_{y \in N_H(x) \setminus B} (1 - \alpha p(y)) = K(x),
		$$
		so
		$$
			\mathbf{E}(p'(x)) = p(x)/K(x) \cdot \Pr(N_H(x) \cap S = \emptyset) = p(x),
		$$
		as desired.
		
		{\sc Case 2:} $p(x)/K(x) > \hat{p}$. In this case, $p'(x) = \hat{p}$ if $N_H(x) \cap S = \emptyset$, and if $N_H(x) \cap S \neq \emptyset$, then $p'(x) = \hat{p}$ with probability $q(x)$ and $p'(x) = 0$ otherwise. Therefore,
		$$
			\mathbf{E}(p'(x)) = \hat{p} \cdot K(x) + \hat{p} \cdot (1-K(x)) \cdot q(x) = p(x),
		$$
		as desired.
	\end{proof}
	
	\begin{lemma}\label{lemma:edgeexp}
		For each $uv \in E(G)$,
		$$
		\mathbf{E}(p'(uv)) = p(uv).
		$$
	\end{lemma}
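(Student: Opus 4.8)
The plan is to reduce the identity to a statement about a single edge of $H$ and then exploit the independence that triangle-freeness provides. Recall that, by definition, $p'(uv) = \sum_{xy \in E(L(u), L(v))} p'(x) p'(y)$, so by linearity of expectation it suffices to prove that
$$
\mathbf{E}\big(p'(x) p'(y)\big) = p(x) p(y)
$$
for every edge $xy$ of $H$ with $x \in L(u)$ and $y \in L(v)$. To establish this, I would show that the random variables $p'(x)$ and $p'(y)$ are independent; then the expectation of the product factors, and the per-vertex identities $\mathbf{E}(p'(x)) = p(x)$ and $\mathbf{E}(p'(y)) = p(y)$ — already proved as Lemma~\ref{lemma:vertexexp} — finish the job.

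For the independence, the key observation (already recorded in the excerpt) is that $p'(x)$ is a deterministic function of the outcome of $R(x)$, i.e.\ of the restriction of $S$ to $N_H(x)$, together with an auxiliary independent coin flip of bias $q(x)$ used in the last case of the definition of $p'$; and likewise $p'(y)$ depends only on $S$ restricted to $N_H(y)$ and on an independent coin of bias $q(y)$. Since $G$ is triangle-free, $N_H(x) \cap N_H(y) = \emptyset$ for any $x \in L(u)$, $y \in L(v)$ with $uv \in E(G)$ (a common $H$-neighbour would lie in some $L(w)$ with $uw, vw \in E(G)$, forming a triangle; note $x \notin N_H(x)$ and $y \notin N_H(y)$, so the pair $x,y$ themselves do not violate disjointness). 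Because the membership indicators $\{[z \in S] : z \in V(H)\}$ are mutually independent and $N_H(x)$, $N_H(y)$ are disjoint, the data determining $p'(x)$ and the data determining $p'(y)$ are independent — this is precisely the mutual independence of $\{R(x) : x \in L(u)\} \cup \{R(y) : y \in L(v)\}$ noted before, augmented by the two auxiliary coins. Hence $\mathbf{E}(p'(x)p'(y)) = \mathbf{E}(p'(x))\mathbf{E}(p'(y)) = p(x)p(y)$. The boundary cases $x \in B$ or $y \in B$ require no argument, since then the relevant value is the constant $\hat{p}$, which equals $p(x)$ (resp.\ $p(y)$).

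I do not expect a real obstacle here; the only point deserving care is to verify that $p'(x)$ genuinely depends only on $S \cap N_H(x)$ (and its private coin) — in particular, not on whether $x$ itself belongs to $S$ — so that the disjointness $N_H(x) \cap N_H(y) = \emptyset$ really does decouple $p'(x)$ from $p'(y)$. Once this localization is spelled out, the lemma is an immediate two-line consequence of independence and Lemma~\ref{lemma:vertexexp}.
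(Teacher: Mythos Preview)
Your proposal is correct and follows essentially the same approach as the paper: reduce by linearity to a single edge $xy \in E(L(u),L(v))$, use triangle-freeness to conclude that $p'(x)$ and $p'(y)$ are independent, and then apply the per-vertex identity $\mathbf{E}(p'(x)) = p(x)$ from Lemma~\ref{lemma:vertexexp}. Your write-up is in fact more careful than the paper's two-line version, since you spell out explicitly that $p'(x)$ depends only on $S \cap N_H(x)$ and its private coin (and not on whether $x \in S$), which is exactly what is needed for the disjointness of $N_H(x)$ and $N_H(y)$ to yield independence.
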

	\begin{proof}
		It is enough to show that for all $xy \in E(L(u), L(v))$,
		$$
		\mathbf{E}(p'(x)p'(y)) = p(x)p(y).
		$$
		Since $G$ is triangle-free, the variables $p'(x)$ and $p'(y)$ are independent. Therefore,
		$$
		\mathbf{E}(p'(x)p'(y)) = \mathbf{E}(p'(x)) \cdot \mathbf{E}(p'(y)) = p(x) p(y),
		$$
		as desired.
	\end{proof}
	
	\begin{lemma}\label{lemma:vertexconc}
		For each $v \in V(G)$,
		$$
			\Pr(|p'(v) - p(v)| > \Delta^{-1/6}) \leq \Delta^{-5}.
		$$
	\end{lemma}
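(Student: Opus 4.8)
The plan is to apply the Simple Concentration Bound to the random variable $p'(v)$, taking advantage of the fact (Lemma~\ref{lemma:vertexexp}) that $\mathbf{E}(p'(v)) = p(v)$. The only subtlety is choosing the right family of independent trials.

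First I would write $p'(v) = \sum_{x \in L(v)} p'(x)$ and recall that for each $x \in L(v)$ the value $p'(x)$ is determined by the outcome of the event $R(x)$ together with at most one additional independent coin flip (the one used when $p(x)/K(x) > \hat{p}$); in particular $p'(x)$ depends only on the restriction of $S$ to $N_H(x) \setminus B$ and on that extra coin, and not on the later choices of $W$, $A$, or $G'$. I would bundle this randomness into a single trial $T_x$ for each $x \in L(v)$. Since the sets $N_H(x)$, $x \in L(v)$, are pairwise disjoint (as recorded just before Lemma~\ref{lemma:vertexexp}, because an edge of $H$ joins $L(u)$ to $L(v)$ only when $uv \in E(G)$) and the auxiliary coins are mutually independent, the $k = |L(v)|$ trials $(T_x)_{x \in L(v)}$ are mutually independent, and $p'(v)$ is determined by them.

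Next I would bound the Lipschitz constant: changing the outcome of one trial $T_x$ alters only the summand $p'(x)$, and $p'(x) \in [0, \hat{p}]$ always (it equals $\hat{p}$ on $B$, lies in $\{0, p(x)/K(x)\} \subseteq [0, \hat{p}]$ in the first case, and in $\{0, \hat{p}\}$ in the second), so $p'(v)$ changes by at most $\hat{p}$. Hence the Simple Concentration Bound applies with $c = \hat{p} = \Delta^{-11/12}$, $n = k$, and $t = \Delta^{-1/6}$, giving
$$
\Pr\!\left(|p'(v) - p(v)| > \Delta^{-1/6}\right) \leq 2\exp\!\left(-\frac{\Delta^{-1/3}}{2\hat{p}^2 k}\right).
$$
Finally I would estimate the exponent: for $\Delta$ large, $k = \lceil 120\Delta/\ln\Delta \rceil \leq 121\Delta/\ln\Delta$, so $\hat{p}^2 k \leq 121\,\Delta^{-5/6}/\ln\Delta$, and the bound becomes at most $2\exp(-\Delta^{1/2}\ln\Delta/242)$, which is below $\Delta^{-5}$ once $\Delta \geq \Delta_0$.

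The main point to get right — and essentially the only nonroutine step — is the bundling in the first paragraph. If one instead applied the Simple Concentration Bound to the "raw" trials (one per vertex of $H$ eligible for $S$), then $n$ would be of order $k\Delta$ and the exponent $\Delta^{-1/3}/(2\hat{p}^2 k \Delta)$ would tend to $0$, making the bound vacuous; collapsing all the randomness that affects a given $p'(x)$ into one trial keeps $n$ equal to $k$, after which the large gap between the step size $\hat{p}$ and the target fluctuation $\Delta^{-1/6}$ yields the claim with enormous room to spare.
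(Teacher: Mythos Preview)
Your proof is correct and follows the same approach as the paper, which simply states that the result ``follows immediately via the Simple Concentration Bound applied to the series of trials $\{R(x)\,:\, x \in L(v)\}$.'' Your explicit bundling of the auxiliary coin flip (used when $p(x)/K(x) > \hat{p}$) into the trial $T_x$ is in fact slightly more careful than the paper's phrasing, and your numerical check of the exponent confirms what the paper leaves to the reader.
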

	\begin{proof}
		The desired result follows immediately via the Simple Concentration Bound applied to the series of trials $\{R(x)\,:\, x \in L(v)\}$.
	\end{proof}
	
	\begin{lemma}\label{lemma:edgeconc}
		For each $uv \in E(G)$,
		$$
		\Pr(|p'(uv) - p(uv)| > k^{-1}\Delta^{-1/3}) \leq \Delta^{-5}.
		$$
	\end{lemma}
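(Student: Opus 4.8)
Fix $uv \in E(G)$ and enumerate $E(L(u),L(v)) = \{x_1y_1, \dots, x_my_m\}$ with $x_i \in L(u)$ and $y_i \in L(v)$; since the edges between $L(u)$ and $L(v)$ form a matching and $|L(u)| = |L(v)| = k$, we have $m \le k$. By definition $p'(uv) = \sum_{i=1}^m p'(x_i)p'(y_i)$, and $\mathbf{E}(p'(uv)) = p(uv)$ by Lemma~\ref{lemma:edgeexp}. The plan is to prove concentration of this sum, and the point that makes it work is that the summands $Z_i \coloneqq p'(x_i)p'(y_i)$ are \emph{mutually independent}. Indeed, $p'(x_i)$ is determined by whether $N_H(x_i)\cap S = \emptyset$ together with the auxiliary $q(x_i)$-coin (relevant only when $p(x_i)/K(x_i) > \hat p$), and similarly for $p'(y_i)$; the neighborhoods $N_H(x_1),\dots,N_H(x_m)$ are pairwise disjoint, $N_H(y_1),\dots,N_H(y_m)$ are pairwise disjoint, and $N_H(x_i)\cap N_H(y_j) = \emptyset$ for \emph{all} $i,j$ because $uv \in E(G)$ and $G$ is triangle-free (these are exactly the observations recorded in the remarks above); the auxiliary coins live at distinct colors, and $S$ is generated by independent trials. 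Hence, if for each $i$ we bundle all the randomness influencing $Z_i$ — the indicators $[z\in S]$ for $z \in (N_H(x_i)\cup N_H(y_i))\setminus B$ and the two auxiliary coins at $x_i$ and $y_i$ — into one compound trial $T_i$, then $T_1,\dots,T_m$ are mutually independent, they determine $p'(uv)$, and changing any one $T_i$ alters $p'(uv)$ by at most $\hat p^2$, since $0 \le Z_i \le \hat p^2$.

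Now I would apply the Simple Concentration Bound to the $n = m \le k$ trials $T_1,\dots,T_m$ with $c = \hat p^2 = \Delta^{-11/6}$, obtaining $\Pr(|p'(uv) - p(uv)| > t) \le 2\exp(-t^2/(2\hat p^4 k))$ for every $t \ge 0$ (here I also use $\mathbf{E}(p'(uv)) = p(uv)$; note that, in contrast to Lemma~\ref{lemma:reduct}, no upper bound on $p(uv)$ is needed). Taking $t = k^{-1}\Delta^{-1/3}$ gives exponent $t^2/(2\hat p^4 k) = \Delta^{3}/(2k^3)$, and since $k = \lceil 120\Delta/\ln\Delta\rceil \le 121\Delta/\ln\Delta$ for large $\Delta$, this is at least $(\ln\Delta)^3/(2\cdot 121^3)$, which exceeds $6\ln\Delta$ once $\Delta$ is larger than a suitable constant. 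For such $\Delta$ the probability is at most $2\Delta^{-6} \le \Delta^{-5}$, as claimed.

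The only delicate step — and the reason the proof is not immediate — is the mutual independence of the $Z_i$, equivalently the grouping into just $m \le k$ compound trials of influence $\hat p^2$ each. If one instead applies the Simple Concentration Bound to the roughly $k\Delta$ individual membership/coin trials, each with influence on the order of $\hat p^2$, the resulting exponent is of order $(\ln\Delta)^3/\Delta \to 0$ and says nothing; and Talagrand's inequality is no better, because certifying that a single product $p'(x_i)p'(y_i)$ is nonzero requires revealing an entire $H$-neighborhood, which forces the certificate-size parameter to be polynomially large in $\Delta$. It is precisely the triangle-freeness of $G$, together with the matching structure of covers, that supplies the independence and lets the argument go through.
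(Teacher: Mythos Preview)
Your argument is correct and is essentially the paper's proof: both apply the Simple Concentration Bound with influence $c = \hat{p}^2$, using the triangle-freeness and matching structure to guarantee independence of the relevant trials. The only difference is cosmetic: the paper keeps the $2k$ trials $\{R(x): x \in L(u)\} \cup \{R(y): y \in L(v)\}$ separate, whereas you bundle each matched pair into a single compound trial $T_i$ (giving $m \le k$ trials); this changes the exponent by a harmless factor of at most $2$, and the numerics go through either way for large $\Delta$.
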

	\begin{proof}
		The desired result follows immediately via the Simple Concentration Bound applied to the series of trials $\{R(x)\,:\, x \in L(u)\} \cup \{R(y)\,:\, y \in L(v)\}$.
	\end{proof}
	
	\paragraph{Estimating the entropy}
	
	In this paragraph we show that, with high probability, $Q'(v)$ is not much smaller than $Q(v)$.
	
	\begin{lemma}\label{lemma:entropyexp}
		Suppose that for all $uv \in E(G)$, $p(uv) \leq P$. Then for each $v \in V(G)$,
		$$
			\mathbf{E}(Q'(v)) \geq Q(v) - \frac{\sqrt{2}P}{\ln \Delta} \cdot \deg_G(v).
		$$
	\end{lemma}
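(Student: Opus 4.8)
The plan is to reduce the statement to a per-color estimate via linearity of expectation. By the definition of the entropy, $Q'(v) = \sum_{x \in L(v)} p'(x)\ln(1/p'(x))$, hence $\mathbf{E}(Q'(v)) = \sum_{x \in L(v)} \mathbf{E}\bigl(p'(x)\ln(1/p'(x))\bigr)$, and it suffices to bound each summand from below. First I would dispose of the trivial colors: if $x \in B$ then $p'(x) = p(x) = \hat p$ deterministically and the summand equals $p(x)\ln(1/p(x))$ with no loss; and if $x \notin B$ but $p(x) = 0$ then $p'(x) = 0$ always, so the summand is again $p(x)\ln(1/p(x)) = 0$ (recall the convention $0\ln(1/0)=0$). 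So the real work is with a color $x \in L(v)\setminus B$ having $p(x) > 0$; here I note that $K(x)$ is a finite product of factors $1-\alpha p(y) \in (0,1)$ (since $\alpha\hat p \to 0$), so $K(x) \in (0,1]$ and $-\ln K(x) = \sum_{y \in N_H(x)\setminus B}\bigl(-\ln(1-\alpha p(y))\bigr) \ge 0$.

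The crux is to show that, in both branches of the algorithm governing such an $x$,
$$
\mathbf{E}\bigl(p'(x)\ln(1/p'(x))\bigr) \ \ge\ p(x)\ln\frac{1}{p(x)} + p(x)\ln K(x).
$$
When $p(x)/K(x) \le \hat p$, I would use $\Pr(N_H(x)\cap S = \emptyset) = K(x)$ to compute the left-hand side exactly as $K(x)\cdot\frac{p(x)}{K(x)}\ln\frac{K(x)}{p(x)} = p(x)\ln\frac{1}{p(x)} + p(x)\ln K(x)$. When $p(x)/K(x) > \hat p$, the definition of $q(x)$ gives $\Pr(p'(x)=\hat p) = K(x) + (1-K(x))q(x) = p(x)/\hat p$, so the left-hand side equals $p(x)\ln(1/\hat p)$; and the case hypothesis $\hat p < p(x)/K(x)$ is precisely what yields $\ln(1/\hat p) > \ln(1/p(x)) + \ln K(x)$, which gives the inequality. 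Summing over $x\in L(v)$, the total entropy loss is then at most $\sum_{x\in L(v)} p(x)\bigl(-\ln K(x)\bigr) = \sum_{x\in L(v)} p(x)\sum_{y\in N_H(x)\setminus B}\bigl(-\ln(1-\alpha p(y))\bigr)$.

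To finish I would bound $-\ln(1-\alpha p(y)) \le \sqrt2\,\alpha p(y)$, valid for $\Delta$ large because $\alpha p(y) \le \alpha\hat p = \hat p/\ln\Delta \to 0$ while $-\ln(1-t)/t$ is increasing with limit $1$ at $0$; this is where the constant $\sqrt2$ comes from. So the loss is at most $\sqrt2\,\alpha\sum_{x\in L(v)}\sum_{y\in N_H(x)} p(x)p(y)$. By the matching condition in the definition of a cover, for each fixed $x\in L(v)$ the set $N_H(x)$ meets each $L(u)$, $u\in N_G(v)$, in at most one vertex, so this double sum equals $\sum_{u\in N_G(v)} p(uv) \le P\deg_G(v)$. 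Hence the loss is at most $\sqrt2\,\alpha P\deg_G(v) = \frac{\sqrt2 P}{\ln\Delta}\deg_G(v)$, which is exactly the claimed bound.

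I expect the main obstacle to be the case analysis in the crux inequality — specifically, verifying that the branch in which $p'(x)$ is rounded up to $\hat p$ is still covered by the bound $p(x)\bigl(-\ln K(x)\bigr)$; the hypothesis $p(x)/K(x) > \hat p$ is used precisely to make that comparison, and one must carry the convention $0\ln(1/0)=0$ and the positivity of $p(x)$ and $K(x)$ carefully through the computation.
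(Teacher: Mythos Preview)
Your proposal is correct and follows essentially the same approach as the paper: the per-color case split, the computation of $\mathbf{E}\bigl(p'(x)\ln(1/p'(x))\bigr)$ in each branch, and the bound on $-\ln K(x)$ via a first-order estimate of $-\ln(1-t)$ all match. The only cosmetic difference is that the paper uses $-\ln(1-t)\le t(1+t)$ and then bounds the factor $1+\alpha\hat p$ by $\sqrt{2}$, whereas you fold this into the single inequality $-\ln(1-t)\le \sqrt{2}\,t$ for small $t$; the effect is identical.
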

	\begin{proof}
		Consider some $x \in L(v) \setminus B$. If $p(x)/K(x) \leq \hat{p}$, then we have
		$$
			\mathbf{E}\left(p'(x) \ln \frac{1}{p'(x)}\right) = p(x)/K(x) \cdot \ln \frac{K(x)}{p(x)} \cdot K(x) = p(x)\ln \frac{1}{p(x)} - p(x) \ln \frac{1}{K(x)}.
		$$
		If, on the other hand, $p(x)/K(x) > \hat{p}$, then
		\begin{align*}
			\mathbf{E}\left(p'(x) \ln \frac{1}{p'(x)}\right) &= \hat{p} \ln \frac{1}{\hat{p}} \cdot K(x) + \hat{p} \ln \frac{1}{\hat{p}} \cdot(1 - K(x)) \cdot q(x) \\
			&= p(x) \ln \frac{1}{p(x)} - p(x) \ln \frac{\hat{p}}{p(x)} \geq p(x)\ln \frac{1}{p(x)} - p(x) \ln \frac{1}{K(x)}.
		\end{align*}
		Since for $x \in B$, $p'(x) = p(x) = \hat{p}$, we get
		\begin{align*}
			\mathbf{E}(Q'(v)) &\geq Q(v) - \sum_{x \in L(v) \setminus B} p(x) \ln \frac{1}{K(x)}\\
			&\geq Q(v) - \sum_{x \in L(v)} p(x) \ln \frac{1}{K(x)}.
		\end{align*}
		
		Note that for sufficiently small $x$,
		$$
			\ln\frac{1}{1-x} \leq x(1+ x).
		$$
		Thus,
		\begin{align*}
			\ln \frac{1}{K(x)} &= \ln \frac{1}{\prod_{y \in N_H(x) \setminus B} (1 - \alpha p(y))} \\
			&\leq \sum_{y \in N_H(x) \setminus B} \alpha p(y)(1 + \alpha p(y))\\
			&\leq \sum_{y \in N_H(x)} \alpha p(y)(1 + \alpha p(y)).
		\end{align*}
		Therefore,
		\begin{align*}
			\mathbf{E}(Q'(v)) &\geq Q(v) - \sum_{x \in L(v)} \sum_{y \in N_H(x)} \alpha p(x) p(y) (1 + \alpha p(y))\\
			&\geq Q(v) - \alpha (1 + \alpha \hat{p}) \sum_{u \in N_G(v)} p(uv) \\
			& \geq Q(v) - \frac{\sqrt{2}P}{\ln \Delta}\cdot\deg_G(v),
		\end{align*}
		as desired.
	\end{proof}
	
	\begin{lemma}\label{lemma:entropyconc}
		For each $v \in V(G)$,
		$$
		\Pr(|Q'(v) - \mathbf{E}(Q'(v))| > \ln \Delta \cdot \Delta^{-1/6}) \leq \Delta^{-5}.
		$$
	\end{lemma}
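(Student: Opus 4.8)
The plan is to apply the Simple Concentration Bound, but the naive application is useless: if one regards each atomic choice ``$y\in S$'' as a trial, then $Q'(v)$ is influenced by on the order of $k\Delta$ trials, each by about $\hat p\ln(1/\hat p)$, and the resulting exponent $t^2/(2c^2 n)$ is $o(1)$. The fix is the observation that $Q'(v)$ is in fact a function of only $k$ mutually independent trials. Write $Q'(v)=\sum_{x\in L(v)}Y_x$ with $Y_x\coloneqq p'(x)\ln(1/p'(x))$. From the description of the algorithm, $p'(x)$ — and hence $Y_x$ — is determined by the bundle $T_x$ consisting of the indicators $\{[y\in S]\,:\,y\in N_H(x)\}$ together with the single auxiliary coin flip attached to $x$ (the one used in the case $p(x)/K(x)>\hat p$). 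Since $N_H(x_1)\cap N_H(x_2)=\emptyset$ for distinct $x_1,x_2\in L(v)$ (the first bullet point among the general remarks, a consequence of the fact that for each $u\in N_G(v)$ the edges between $L(u)$ and $L(v)$ form a matching), the bundles $\{T_x\,:\,x\in L(v)\}$ depend on pairwise disjoint sets of atomic independent choices, hence are mutually independent, and $Q'(v)$ is a function of just these $k$ trials.

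Next I would bound the effect of a single trial. Because $\hat p=\Delta^{-11/12}<1/e$ for $\Delta$ large and $t\mapsto t\ln(1/t)$ is increasing on $[0,1/e]$, and because $p'(x)\in[0,\hat p]$ always (this is immediate from the definition of $p'$), each $Y_x$ lies in the interval $[0,\hat p\ln(1/\hat p)]$. Consequently, changing the outcome of one $T_x$ changes $Q'(v)=\sum_{x}Y_x$ by at most $c\coloneqq\hat p\ln(1/\hat p)=\tfrac{11}{12}\Delta^{-11/12}\ln\Delta$.

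Finally I would invoke the Simple Concentration Bound with $n=k$, this value of $c$, and $t=\Delta^{-1/6}\ln\Delta$. Using $k\le 121\Delta/\ln\Delta$ for $\Delta$ large one gets $c^2k=O(\Delta^{-5/6}\ln\Delta)$, so that $t^2/(2c^2k)=\Omega(\Delta^{1/2}\ln\Delta)$, and hence
$$
\Pr\bigl(|Q'(v)-\mathbf E(Q'(v))|>\Delta^{-1/6}\ln\Delta\bigr)\le 2e^{-\Omega(\Delta^{1/2}\ln\Delta)}\le\Delta^{-5}
$$
once $\Delta$ is sufficiently large. The only delicate point in all of this is the bookkeeping of the first paragraph — certifying that $Q'(v)$ depends on exactly the $k$ independent compound trials $\{T_x\,:\,x\in L(v)\}$, so that the Simple Concentration Bound may be applied with $n=k$ rather than with the vastly larger collection of all atomic choices in the algorithm; the disjointness of the $H$-neighbourhoods of the colors in $L(v)$ is precisely what makes this legitimate, and the rest is a routine estimate.
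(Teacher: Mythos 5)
Your proof is correct and takes the same route as the paper: the paper's proof is a one-liner that applies the Simple Concentration Bound to the trials $\{R(x):x\in L(v)\}$, which is exactly the mutual-independence observation and Lipschitz bound you spell out in detail. The only thing you add beyond the paper is the (welcome) care of bundling the auxiliary coin flip for each $x$ into the trial $T_x$, since $p'(x)$ is not literally a function of $R(x)$ alone in the fourth case of the algorithm.
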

	\begin{proof}
		The desired result follows immediately via the Simple Concentration Bound applied to the series of trials $\{R(x)\,:\, x \in L(v)\}$.
	\end{proof}
	
	\paragraph{Estimating the degrees}
	
	In this paragraph we show that, with high probability, the vertices of $G'$ have smaller degrees than the vertices of $G$. For each $v \in V(G)$, define the following random variable:
	$$
		d'(v) \coloneqq |N_G(v) \cap V(G')|.
	$$
	Note that for each $v \in V(G')$, $d'(v) = \deg_{G'}(v)$.
	
	\begin{lemma}\label{lemma:degexp}
		Suppose that for all $v \in V(G)$,
		$$
			p_1 \leq p_m(v) \leq p_2,
		$$
		and for all $uv \in E(G)$, $p(uv) \leq P$.
		Then for each $v \in V(G)$,
		$$
			\mathbf{E}(d'(v)) \leq \deg_G(v) \cdot \left(1 - \alpha p_1 + \alpha^2 (p_2^2 +P \Delta)\right).
		$$
	\end{lemma}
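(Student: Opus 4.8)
The plan is to use linearity of expectation. Since a neighbour $u \in N_G(v)$ lies in $V(G')$ exactly when $u \notin A$, we have
$$
\mathbf{E}(d'(v)) \;=\; \sum_{u \in N_G(v)} \Pr(u \notin A),
$$
so it suffices to prove that for every $u \in N_G(v)$,
$$
\Pr(u \in A) \;\geq\; \alpha p_1 - \alpha^2\big(p_2^2 + P\Delta\big),
$$
equivalently $\Pr(u \notin A) \leq 1 - \alpha p_1 + \alpha^2(p_2^2 + P\Delta)$; summing the latter over the (at most $\deg_G(v)$) neighbours of $v$ gives the claim.

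To bound $\Pr(u \in A)$ from below I would start from the combinatorial observation that if some $x \in L(u)$ lies in $W$ while $|L(u) \cap S| \leq 1$, then necessarily $L(u) \cap S = \{x\}$, which forces $L(u)\cap S \neq \emptyset$ and $L(u)\cap S \subseteq W$, i.e.\ $u \in A$. Hence
$$
\{u \in A\} \;\supseteq\; \Big(\bigcup_{x \in L(u)} \{x \in W\}\Big) \setminus \{\,|L(u) \cap S| \geq 2\,\},
$$
and therefore, by Bonferroni's inequality together with $\Pr(E\setminus C)\geq\Pr(E)-\Pr(C)$,
$$
\Pr(u \in A) \;\geq\; \sum_{x \in L(u)} \Pr(x \in W) \;-\; \sum_{\{x,x'\} \subseteq L(u)} \Pr(x \in W,\, x' \in W) \;-\; \Pr\big(|L(u) \cap S| \geq 2\big).
$$

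For the main term: if $x \in B$ then $x \notin S$, so $\Pr(x \in W) = 0$; and if $x \in L(u) \setminus B$, then $\{x \in S\}$ and $\{N_H(x) \cap S = \emptyset\}$ are independent (as $x \notin N_H(x)$), giving $\Pr(x \in W) = \alpha p(x) K(x)$. Using the elementary bound $K(x) \geq 1 - \alpha \sum_{y \in N_H(x)} p(y)$, the identity $\sum_{x \in L(u)} \sum_{y \in N_H(x)} p(x)p(y) = \sum_{w \in N_G(u)} p(uw)$, and the hypotheses $\sum_{x \in L(u)\setminus B} p(x) = p_m(u) \geq p_1$, $p(uw) \leq P$, $\deg_G(u) \leq \Delta$, I obtain
$$
\sum_{x \in L(u)} \Pr(x \in W) \;\geq\; \alpha\, p_m(u) - \alpha^2 \sum_{w \in N_G(u)} p(uw) \;\geq\; \alpha p_1 - \alpha^2 P\Delta .
$$
Each of the two correction terms is at most $\sum_{\{x,x'\}} \alpha^2 p(x)p(x') \leq \tfrac12\big(\sum_{x \in L(u)} \alpha p(x)\big)^2 = \tfrac12\alpha^2 p_m(u)^2 \leq \tfrac12\alpha^2 p_2^2$, so together they subtract at most $\alpha^2 p_2^2$. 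Combining the three estimates gives exactly $\Pr(u \in A) \geq \alpha p_1 - \alpha^2(p_2^2 + P\Delta)$.

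The independence bookkeeping is routine: all that is used about $S$ is that distinct colours receive independent coins, that $x \notin N_H(x)$, and (for the double-sum identity) that every $H$-neighbour of a colour in $L(u)$ lies in $L(w)$ for some $w \in N_G(u)$. The one point needing care is the containment $\{u\in A\} \supseteq (\bigcup_x\{x\in W\}) \setminus \{|L(u)\cap S| \geq 2\}$: since the definition of $A$ \emph{permits} several colours of $L(u)$ to land in $S$ provided they are all isolated, one cannot simply identify $\{u\in A\}$ with $\bigcup_x\{x\in W\}$, and it is precisely the resulting term $\Pr(|L(u)\cap S|\geq 2)$ that produces the second-order correction $\alpha^2 p_2^2$.
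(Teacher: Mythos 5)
Your proof is correct, and it gives exactly the stated bound, but the route differs from the paper's in the choice of set decomposition. The paper works with the exact identity $\{u \in A\} = \{L(u) \cap S \neq \emptyset\} \setminus \{L(u) \cap S \not\subseteq W\}$: the first event is bounded below by Bonferroni on $\{x \in S\}$ (yielding $\alpha p_1 - \alpha^2 p_2^2$), and the second is bounded above by a union bound over edges of $H$ incident to $L(u)$ (yielding $\alpha^2 P\Delta$). You instead use the \emph{containment} $\{u \in A\} \supseteq \bigl(\bigcup_x \{x \in W\}\bigr) \setminus \{|L(u)\cap S| \geq 2\}$, working directly with the events $\{x \in W\}$; this forces you to evaluate $\Pr(x \in W) = \alpha p(x)K(x)$ and linearize $K(x)$, so the $\alpha^2 P\Delta$ error arises from that linearization rather than from a separate ``escape'' event, while the $\alpha^2 p_2^2$ error splits into two halves coming from the Bonferroni pair term and the collision term. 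The two accountings are essentially a rearrangement of the same second-order errors, but the paper's decomposition has the small advantage of being an equality, so it avoids the mild redundancy you observed between your two correction terms (the event $\{x,x' \in W\}$ is already contained in $\{|L(u)\cap S|\geq 2\}$), and it never needs to estimate $K(x)$ at all in this lemma.
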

	\begin{proof}
		It is enough to show that for each $v \in V(G)$,
		$$
			\Pr(v \in V(G')) \leq 1 - \alpha p_1 + \alpha^2 (p_2^2 +P \Delta),
		$$
		i.e.,
		$$
			\Pr(v \in A) \geq \alpha p_1 - \alpha^2 (p_2^2 +P \Delta).
		$$
		We have
		\begin{align*}
			\Pr(L(v) \cap S \neq \emptyset) &\geq \sum_{x \in L(v) \setminus B} \alpha p(x) - \sum_{\substack{x, y \in L(v)\setminus B,\\ x \neq y}} \alpha^2 p(x) p(y) \\
			&\geq \sum_{x \in L(v) \setminus B} \alpha p(x) - \left(\sum_{x \in L(v) \setminus B} \alpha p(x)\right)^2 \\
			&\geq \alpha p_1 - \alpha^2 p_2^2.
		\end{align*}
		Also,
		\begin{align*}
			\Pr(L(v) \cap S \not\subseteq W) & \leq \sum_{x \in L(v)} \sum_{y \in N_H(x)} \alpha^2 p(x) p(y) \\
			& = \alpha^2 \sum_{u \in N_G(v)} p(uv)\\
			& \leq \alpha^2 P \Delta.
		\end{align*}
		Thus,
		\begin{align*}
			\Pr(v \in A) \geq \Pr(L(v) \cap S \neq \emptyset) - \Pr(L(v) \cap S \not\subseteq W) \geq \alpha p_1 - \alpha^2 (p_2^2 +P \Delta),
		\end{align*}
		as desired.
	\end{proof}
	
	\begin{lemma}\label{lemma:degconc}
		For each $v \in V(G)$,
		$$
			\Pr(|d'(v) - \mathbf{E}(d'(v))| > \Delta^{2/3}) \leq \Delta^{-5}.
		$$
	\end{lemma}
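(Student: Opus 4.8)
The plan is to reduce, by a conditioning argument that exploits triangle-freeness, to the concentration of a sum of independent indicators (dispatched by the Chernoff Bound), and then to carry out a second, more delicate, concentration step for the conditional expectation that remains; it is this second step where the argument of \cite{MR} does not transfer. Write $X \coloneqq |N_G(v)\cap A|$, so that $d'(v) = \deg_G(v) - X$ with $\deg_G(v)$ non-random, and it suffices to concentrate $X$. If $\deg_G(v)\le\Delta^{2/3}$ then $X$ and $\mathbf{E}(X)$ both lie in $[0,\deg_G(v)]$, so $|X-\mathbf{E}(X)|\le\Delta^{2/3}$ surely and there is nothing to prove; hence assume $\deg_G(v)>\Delta^{2/3}$. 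Split the colours by their distance in $G$ from $v$: let $\mathcal L\coloneqq\bigcup_{u\in N_G(v)}L(u)$ and $\mathcal L_{\mathrm{out}}\coloneqq V(H)\setminus\mathcal L$ (the colours of $v$ and of vertices at distance at least $2$). Because $G$ is triangle-free, $N_G(u)\cap N_G(v)=\emptyset$ for every $u\in N_G(v)$, so for such $u$ and every $x\in L(u)$ we have $N_H(x)\subseteq L(v)\cup\bigcup_{w\in N_G(u)\setminus\{v\}}L(w)\subseteq\mathcal L_{\mathrm{out}}$. Thus, once $S_{\mathrm{out}}\coloneqq S\cap\mathcal L_{\mathrm{out}}$ is revealed, the set $L^+(u)\coloneqq\{x\in L(u):N_H(x)\cap S=\emptyset\}$ is determined for each $u\in N_G(v)$, and the event $\{u\in A\}$ --- i.e.\ $L(u)\cap S\ne\emptyset$ and $L(u)\cap S\subseteq L^+(u)$ --- depends only on the not-yet-revealed trials inside $L(u)$. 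As the $L(u)$, $u\in N_G(v)$, are pairwise disjoint, this makes $X=\sum_{u\in N_G(v)}\mathbf 1[u\in A]$ a sum of independent $\{0,1\}$-valued variables conditionally on $S_{\mathrm{out}}$.

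Let $\Phi(S_{\mathrm{out}})\coloneqq\mathbf{E}(X\mid S_{\mathrm{out}})=\sum_{u\in N_G(v)}\Pr(u\in A\mid S_{\mathrm{out}})$. Since $\Phi(S_{\mathrm{out}})\le\deg_G(v)\le\Delta$, the Chernoff Bound in the conditional measure gives, for every value of $S_{\mathrm{out}}$,
$$\Pr\big(|X-\Phi(S_{\mathrm{out}})|>\tfrac12\Delta^{2/3}\mid S_{\mathrm{out}}\big)\le 2\exp\big(-\Omega(\Delta^{1/3})\big)\le\tfrac12\Delta^{-5},$$
hence the same bound holds unconditionally. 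As $\mathbf{E}(X)=\mathbf{E}(\Phi(S_{\mathrm{out}}))$, it remains to concentrate $\Phi(S_{\mathrm{out}})$ within $\tfrac12\Delta^{2/3}$ of its mean. Writing $a_x\coloneqq\Pr(x\in S)$ one computes $\Pr(u\in A\mid S_{\mathrm{out}})=(1-K^0_u)-h_u(S_{\mathrm{out}})$, where $K^0_u\coloneqq\prod_{x\in L(u)}(1-a_x)$ is a constant and $h_u(S_{\mathrm{out}})\coloneqq 1-\prod_{x\in L(u)\setminus L^+(u)}(1-a_x)\in[0,1]$; thus $\Phi$ differs by a constant from $\Psi(S_{\mathrm{out}})\coloneqq\sum_{u\in N_G(v)}h_u(S_{\mathrm{out}})$, and the task is to concentrate $\Psi$.

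This last step is the heart of the matter. Each $h_u$ depends on $S_{\mathrm{out}}$ only through which colours of $L(u)$ have an $S$-neighbour, and a single colour $y\in\mathcal L_{\mathrm{out}}$, say $y\in L(w)$, influences $h_u$ only for $u\in N_G(v)\cap N_G(w)$, and then by at most $a_x\le\alpha\hat p$ for the unique $x\in N_H(y)\cap L(u)$; since $|N_G(v)\cap N_G(w)|$ can be as large as $\Delta$ (e.g.\ in a complete bipartite graph), one trial can move $\Psi$ by $\Theta(\alpha\hat p\,\Delta)=\Theta(\Delta^{1/12}/\ln\Delta)$, and there are $O(\Delta^3)$ relevant trials --- so the Simple Concentration Bound and every Azuma-type estimate are far too weak (this is exactly where the approach of \cite{MR}, in which the corresponding quantity is Lipschitz in the individual random choices and Talagrand applies at once, cannot be adapted). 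I would recover concentration by: (i) bounding the probability of the bad event $\mathcal B$ that some $y\in\mathcal L_{\mathrm{out}}$ has more than $\Delta^{1/6}$ of its $H$-neighbours in $\mathcal L$ selected into $S$ --- each such count is a sum of independent indicators of mean $O(\Delta^{1/12}/\ln\Delta)$, so $\Pr(\mathcal B)\le O(\Delta^3)\exp(-\Omega(\Delta^{1/6}\ln\Delta))\ll\Delta^{-5}$ by the Chernoff Bound; (ii) conditioning also on the events $\{L(u)\cap S=\emptyset\}$, $u\in N_G(v)$, so that on the complement of $\mathcal B$ the variable $\Psi$ is $O(\Delta^{1/6})$-Lipschitz and $O(1)$-certifiable in the remaining trials (each positive term of $\Psi$ witnessed by a single selected pair of colours); and (iii) applying Talagrand's Inequality on this good part, together with an upper bound on the relevant conditional mean of $\Psi$ for the $60c\sqrt{r\,\mathbf{E}(\cdot)}$ term. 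The genuine obstacle is (iii): Talagrand's error term $O(c\sqrt{r\,\mathbf{E}(\Psi)})$ plus the target probability $\Delta^{-5}$ must both fit within the $\pm\Delta^{2/3}$ window, which forces the truncation level $c$, the certifiability $r$, and the conditional mean of $\Psi$ to be balanced simultaneously (one wants $c=O(\Delta^{1/6})$, $r=O(1)$, conditional mean $O(\Delta/\ln\Delta)$); and when that conditional mean is large, Talagrand cannot be used directly and one must split off, and treat separately, those neighbours $u$ with $\Pr(u\notin A\mid\cdots)$ bounded away from $0$. Getting this balance right, and nailing down all the constants, is the real work of the lemma.
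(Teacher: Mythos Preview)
Your decomposition has the same two-stage shape as the paper's, but you condition in the \emph{opposite order}, and this is why your second step jams. You reveal $S_{\mathrm{out}}$ (the trials in $N_2\cup L(v)$) first; then $X$ is a sum of conditionally independent indicators and Chernoff handles $|X-\Phi(S_{\mathrm{out}})|$ --- that part is fine. The trouble is concentrating $\Phi(S_{\mathrm{out}})$: a single colour $y\in L(w)$ can move $\Psi$ by $\alpha\hat p\cdot|N_G(v)\cap N_G(w)|$, and as you note this can be $\Theta(\Delta^{1/12}/\ln\Delta)$. Your proposed fix (i)--(iii) does not rescue this, because the bad event $\mathcal B$ (``some $y\in\mathcal L_{\mathrm{out}}$ has too many $H$-neighbours in $\mathcal L\cap S$'') and the further conditioning on $\{L(u)\cap S=\emptyset\}$ both involve the $\mathcal L$-trials, which have already been integrated out in $\Phi$. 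Since $\Psi$ is a function of $S_{\mathrm{out}}$ alone, conditioning on $\mathcal L$-events cannot reduce its Lipschitz constant as a function of the $\mathcal L_{\mathrm{out}}$-trials; so the application of Talagrand you sketch does not go through, and the ``balancing'' you defer to the end is not just bookkeeping but an actual gap.

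The paper avoids this by conditioning the other way round. Set $N_1\coloneqq\bigcup_{u\in N_G(v)}L(u)$ and $N_2\coloneqq\bigcup_{u\in N_G(v)}\bigcup_{w\in N_G(u)}L(w)$; by triangle-freeness $N_1\cap N_2=\emptyset$, and $d'(v)$ is determined by $S\cap(N_1\cup N_2)$. First condition on $T\coloneqq N_1\cap S$. Viewing $T$ as determined by the $\deg_G(v)\le\Delta$ independent trials $\{L(u)\cap S:u\in N_G(v)\}$, the conditional expectation $\mathbf E_T(d'(v))$ is $1$-Lipschitz in these trials: again by triangle-freeness, changing $L(u)\cap S$ can only affect whether $u$ itself lies in $A$. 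Hence the Simple Concentration Bound gives $|\mathbf E_T(d'(v))-\mathbf E(d'(v))|\le\tfrac12\Delta^{2/3}$ with probability $\ge 1-\Delta^{-6}$. Second, fix $T$ and work with the $N_2$-trials. Call $y\in N_2$ \emph{$T$-saturated} if $|N_H(y)\cap T|>\Delta^{1/10}$; a Chernoff bound plus a union bound over $|N_2|\le\Delta^3$ shows there are no $T$-saturated colours with probability $\ge 1-\Delta^{-6}$. On that event, flipping a single $N_2$-trial $y$ changes at most $\Delta^{1/10}$ relevant colours in $T$, hence affects at most $\Delta^{1/10}$ of the events $\{u\in A\}$; so $d'(v)$ is $\Delta^{1/10}$-Lipschitz and $1$-certifiable (each $u\notin A$ with $L(u)\cap T\ne\emptyset$ is witnessed by one $y\in N_2\cap S$), and Talagrand's inequality with $c=\Delta^{1/10}$, $r=1$, $\mathbf E_T(d'(v))\le\Delta$ gives $|d'(v)-\mathbf E_T(d'(v))|\le\tfrac12\Delta^{2/3}$ with probability $\ge 1-\Delta^{-6}$. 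Summing the three failure probabilities yields the lemma. The point is that in this order the ``saturation'' event is measurable with respect to what you have already revealed, so it genuinely caps the Lipschitz constant in the remaining randomness --- exactly what fails in your ordering.
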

	\begin{proof}
		Here our proof diverges from the proof of the analogous statement for ordinary colorings given in~\cite{MR}. Let
		$$
			N_1 \coloneqq \bigcup_{u \in N_G(v)} L(u)
		$$
		and
		$$
			N_2 \coloneqq \bigcup_{u \in N_G(v)} \bigcup_{w \in N_G(u)} L(w).
		$$
		Since $G$ is triangle-free, $N_1 \cap N_2 = \emptyset$. Note that to determine $d'(v)$, it is enough to specify which colors in $N_1 \cup N_2$ belong to $S$. Call a color $x\in N_2$ \emph{saturated} if $|N_H(x) \cap S| > \Delta^{1/10}$.
		
		\begin{claim}\label{claim:nosat}
			With probability at least $1 - \Delta^{-6}$, there are no saturated colors.
		\end{claim}
		\begin{claimproof}
			The Chernoff Bound implies that, for sufficiently large $\Delta$, the probability that a given color $x \in N_2$ is saturated is less that $\Delta^{-9}$. Since $|N_2| \leq \Delta^2 \cdot k \leq \Delta^3$, the claim follows.
		\end{claimproof}
		
		For a set $T \subseteq N_1$, let $\mathbf{E}_T$ denote the operator of conditional expectation given that $N_1 \cap S = T$.
		
		\begin{claim}\label{claim:condexp}
			Suppose that $T \subseteq N_1$ is chosen randomly according to the distribution of $N_1 \cap S$. Then with probability at least $1 -\Delta^{-6}$,
			$$
				|\mathbf{E}(d'(v)) - \mathbf{E}_T(d'(v))| \leq \frac{1}{2}\Delta^{2/3}.
			$$
		\end{claim}
		\begin{claimproof}
			Note that the following set of $\deg_G(v) \leq \Delta$ independent trials determines $T$:
			$$
				\{L(u) \cap S \,:\, u \in N_G(v)\}.
			$$
			Moreover, changing the outcome of any one trial can affect $\mathbf{E}_T(d'(v))$ by at most $1$. The result thus follows immediately from the Simple Concentration Bound.
		\end{claimproof}
		
		Note that to know which colors in $N_2$ are saturated, it is enough to know the set $N_1 \cap S$. Call a color $x \in N_2$ \emph{$T$-saturated} if it is saturated when $N_1 \cap S = T$.
		
		\begin{claim}\label{claim:cond}
			Suppose that $T \subseteq N_1$ is fixed and such that no color in $N_2$ is $T$-saturated. Then, given that $N_1 \cap S = T$, with probability at least $1 - \Delta^{-6}$,
			$$
				|d'(v) - \mathbf{E}_T(d'(v))| \leq \frac{1}{2}\Delta^{2/3}.
			$$
		\end{claim}
		\begin{claimproof}
			With $N_1 \cap S$ fixed, the value $d'(v)$ is determined by $|N_2|$ independent trials corresponding to the colors in $N_2$. Changing the outcome of any one trial can affect $d'(v)$ by at most $\Delta^{1/10}$ since no color in $N_2$ is saturated. Therefore, we can apply Talagrand's Inequality with $c = \Delta^{1/10}$, $r = 1$, and using the fact that $\mathbf{E}_T(d'(v)) \leq \Delta$.
		\end{claimproof}
		
		Putting together Claims~\ref{claim:nosat}, \ref{claim:condexp}, and \ref{claim:cond}, we get
		$$
			\Pr(|d'(v) - \mathbf{E}(d'(v))| > \Delta^{2/3}) \leq 3 \Delta^{-6} < \Delta^{-5},
		$$
		as desired.
	\end{proof}
	
	\subsubsection{Completing the proof of Lemma~\ref{lemma:reduct}}	
	
	Let us now state the lemma again.
	
	\begin{lemmacopy}{lemma:reduct}
		Suppose that $G$ is a triangle-free graph with maximum degree at most $\Delta$, $(L, H)$ is a cover of $G$ such that $|L(v)| = \left\lceil 120\Delta/\ln \Delta\right\rceil \eqqcolon k$ for all $v \in V(G)$, and $p\colon V(H) \to [0; \hat{p}]$, where $\hat{p} \coloneqq \Delta^{-11/12}$. Moreover, assume that
		\begin{enumerate}
			\item for each $v \in V(G)$, $|p(v) - 1| \leq \Delta^{-1/10}$;
			\item for each $uv \in E(G)$, $p(uv) \leq \sqrt{2} k^{-1}$;
			\item for each $v \in V(G)$, $Q(v) \geq \ln k - \ln\Delta/40$; and
			\item for each $x \in V(H)$, either $p(x) = 0$ or $p(x) \geq 1/k$.
		\end{enumerate}
		Then there exists a reduct $(G', L', H', p')$ of $(G, L, H, p)$ such that
		\begin{enumerate}
			\item for each $v \in V(G')$, $|p'(v) - p(v)| \leq \Delta^{-1/6}$;
			\item for each $uv \in E(G')$, $p'(uv)\leq p(uv) + k^{-1}\Delta^{-1/3}$;
			\item for each $v \in V(G')$, $Q'(v) \geq Q(v) - 2\deg_G(v)/(k\ln\Delta) - \ln\Delta \cdot \Delta^{-1/6}$;
			\item for each $v \in V(G')$, $\deg_{G'}(v) \leq \deg_G(v) \cdot(1-2/(3\ln\Delta)) + \Delta^{2/3}$; and
			\item for each $x \in V(H')$, either $p'(x) = 0$ or $p'(x) \geq 1/k$.
		\end{enumerate}
	\end{lemmacopy}
	\begin{proof}
		We just need to show that the tuple $(G', L', H', p')$, obtained using our procedure, satisfies properties 1--5 with positive probability. Note that, according to Lemma~\ref{lemma:correct}, property 5 is always satisfied. The other properties can be easily verified using the Lov\'{a}sz Local Lemma together with Lemmas~\ref{lemma:vertexexp}, \ref{lemma:vertexconc}, \ref{lemma:edgeexp}, \ref{lemma:edgeconc}, \ref{lemma:entropyexp}, \ref{lemma:entropyconc}, \ref{lemma:degexp}, and \ref{lemma:degconc}. We only need to compute the bound on $\mathbf{E}(d'(v))$ given by Lemma~\ref{lemma:degexp}. Due to Lemma~\ref{lemma:entropy}, we have
		$$
			\frac{7}{10}-o(1) \leq p_m(v) \leq 1 + o(1).
		$$
		Plugging these bounds into Lemma~\ref{lemma:degexp} gives
		$$
			\mathbf{E}(d'(v)) \leq \deg_G(v) \cdot\left(1 - \frac{7}{10\ln\Delta} + \frac{\sqrt{2}\Delta}{k(\ln\Delta)^2} + o\left(\frac{1}{\ln\Delta}\right)\right) < \deg_G(v)\cdot\left(1 - \frac{2}{3\ln\Delta}\right),
		$$
		as desired.
	\end{proof}

	\subsection{Finishing the proof of Theorem~\ref{theo:upperbound}}\label{subsec:last}
	
	Let $G$ be a triangle-free graph with maximum degree at most $\Delta$ and suppose that $(L, H)$ is a cover of $G$ such that $|L(v)| = \left\lceil 120\Delta/\ln \Delta\right\rceil \eqqcolon k$ for all $v \in V(G)$. Define $p(x) \coloneqq 1/k$ for all $x \in V(H)$ and let $\hat{p} \coloneqq \Delta^{-11/12}$. Observe that
	\begin{enumerate}
		\item for each $v \in V(G)$, $p(v) = 1$;
		\item for each $uv \in E(G)$, $p(uv) \leq k^{-1}$;
		\item for each $v \in V(G)$, $Q(v) = \ln k$.
	\end{enumerate}	
	
	Let $i^\ast$ be the least nonnegative integer such that
	$$
		\Delta \cdot\left(1-\frac{2}{3\ln\Delta}\right)^{i^\ast} + i^\ast\Delta^{2/3} \leq \left(\frac{3}{5}\right)^2 \cdot \frac{1}{4} \cdot \frac{1}{\sqrt{2}} \cdot k.
	$$
	Note that $i^\ast = O(\ln \Delta \ln\ln\Delta)$. After $i^\ast$ applications of Lemma~\ref{lemma:reduct} we obtain a reduct $(G'', L'', H'', p'')$ of $(G, L, H, p)$ such that
	\begin{enumerate}
		\item for each $v \in V(G'')$, $|p''(v) - 1| \leq i^\ast \Delta^{-1/6} < \Delta^{-1/10}$;
		\item for each $uv \in E(G'')$, $p''(uv)\leq k^{-1} + i^\ast k^{-1}\Delta^{-1/3} < \sqrt{2} k^{-1}$;
		\item for each $v \in V(G'')$,
		$$
			\deg_{G''}(v) \leq \Delta \cdot\left(1-\frac{2}{3\ln\Delta}\right)^{i^\ast} + i^\ast\Delta^{2/3} \leq \left(\frac{3}{5}\right)^2 \cdot \frac{1}{4} \cdot \frac{1}{\sqrt{2}} \cdot k;
		$$
		\item for each $v \in V(G'')$,
		\begin{align*}
			Q''(v) &\geq \ln k - \frac{2\Delta}{k\ln\Delta}\sum_{i = 0}^{i^\ast-1} \left(\left(1 - \frac{2}{3\ln\Delta}\right)^i + i \Delta^{-1/3}\right) - i^\ast \cdot\ln\Delta \cdot \Delta^{-1/6}\\
			&\geq \ln k - \frac{1}{40} \ln\Delta.
		\end{align*}
	\end{enumerate}
	
	We claim that the tuple $(G'', L'', H'', p'')$ is nice. Indeed, according to Lemma~\ref{lemma:entropy}, for all $v \in V(G'')$,
	$$
		p''_m(v) \geq \frac{7}{10} - o(1).
	$$
	On the other hand, for every $uv \in E(G'')$,
	$$
		p''_m(uv) \leq p''(uv) \leq \sqrt{2}k^{-1}.
	$$
	Therefore, for all $v \in V(G'')$,
	\begin{align*}
		\sum_{u \in N_{G''}(v)} p''_m(uv) 
		&\leq \left(\frac{3}{5}\right)^2 \cdot \frac{1}{4} \cdot \frac{1}{\sqrt{2}} \cdot k \cdot \sqrt{2}k^{-1} \\ &= \frac{1}{4} \cdot \left(\frac{3}{5}\right)^2 < \frac{1}{4} \left(\frac{7}{10}\right)^2,
	\end{align*}
	as desired.
	
	Since the tuple $(G'', L'', H'', p'')$ is nice, $G''$ admits a $p''$-moderate $(L'', H'')$-coloring. But $(G'', L'', H'', p'')$ is a reduct of $(G, H, L, p)$, so this coloring can be extended to a $p$-moderate $(L, H)$-coloring of $G$. Therefore, $G$ is $(L,H)$-colorable, as desired.
	
	\paragraph*{Acknowledgments}
	This work is supported by the Illinois Distinguished Fellowship.
	I am grateful to Alexandr Kostochka for drawing my attention to the notion of correspondence coloring and for helpful conversations. I am also grateful to the anonymous referees for their valuable comments.

\end{document}